\newtheorem{thm}{Theorem}[section]
\newtheorem{prop}[thm]{Proposition}
\newtheorem{cor}[thm]{Corollary}
\newtheorem{lem}[thm]{Lemma}
\newtheorem{rem}[thm]{Remark}
\newcommand{\T}{\mathbb{T}}
\newcommand{\D}{\mathbb{D}}
\newcommand{\cD}{{\mathcal{D}}}
\newcommand{\cS}{{\mathcal{S}}}
\newcommand{\cH}{{\mathcal{H}}}
\newcommand{\bT}{\mathbf{T_{\mu}}}
\newcommand{\bB}{\widetilde{\mu}}
\newcommand{\cA}{{\mathcal{A}}}
\newcommand{\hH}{\mathrm{H^{2}   }}
\newcommand{\caap}{\mathrm{cap}}
\newcommand{\Hol}{\mathrm{Hol}}
\newcommand\TT{\mathbb{T}}
\newcommand\DD{\mathbb{D}}
\newcommand\RR{\mathbb{R}}
\begin{document}
\author[Benazzouz, EL-Fallah, Kellay, Mahzouli]{H. Benazzouz, O. El-Fallah, K. Kellay, H. Mahzouli}

\address{\footnotesize Laboratoire Analyse et Applications URAC/03\\ Universit\'e Mohamed V Agdal-Rabat- \\  B.P. 1014 Rabat\\Morocco}
\email{elfallah@fsr.ac.ma, benazouz@fsr.ac.ma, mahzouli@fsr.ac.ma}

\address{\footnotesize IMB\\Universite Bordeaux I\\
351 cours de la Lib\'eration\\33405 Talence \\France}
\email{kkellay@math.u-bordeaux1.fr}
\thanks{\footnotesize First, second and fourth authors supported by CNRST (URAC/03).
Second and fourth authors supported by Acad\'emie Hassan II des sciences et techniques. Second and third authors
supported by PICS-CNRS}
\keywords{\footnotesize Hardy space, Dirichlet space, composition operators, capacity, level set, Schatten classes}
\subjclass[2000]{47B38, 30H05, 30C85, 47A15}

\title{Contact points  and Schatten composition operators}

\maketitle

\begin{abstract} We study composition  operators on Hardy and Dirichlet spaces  belonging to  Schatten classes. We give some new examples and  analyse the size of contact set of the symbol of such operators.
\end{abstract}

\section{Introduction}
In this  paper, we consider composition operators  acting on  Hardy and weighted  Dirichlet spaces. 
Let $\DD$ be the unit disc.  Let $dA(z)=dxdy/\pi$ denote the normalized area measure on $\DD$. For $\alpha>-1$, $dA_\alpha$ will denote the finite measure on the unit disc $\DD$
given by
$$
dA_\alpha(z) := (1+\alpha)(1-|z|^2)^\alpha dA(z).
$$
The weighted Dirichlet space $\cD_\alpha$ ($0\leq \alpha\leq 1$) consists of those analytic functions on $\DD$ such that
$$ \cD_\alpha(f):=\int_\DD|f'(z)|^2 dA_\alpha(z)\asymp\sum_{n\geq 0}|\widehat{f}(n)|^2(1+n)^{1-\alpha}<\infty.$$
Note that the classical  Dirichlet space ${\cD}$ corresponds to $\alpha=0$ and ${\cD}_1= \hH$ is  the Hilbertian Hardy space.
Every function $f\in \cD_\alpha$ has non--tangential limits almost everywhere on the unit circle $\TT=\partial \DD$. If the non--tangential limit of $f$ at $\zeta\in \TT$ exists it also will be denoted by $f(\zeta)$.

Let $\varphi$ be a holomorphic self-map of $\D$.  The composition operator $C_\varphi$
on $\cD_\alpha$ is defined by
$$C_\varphi(f)=f\circ\varphi,\qquad f\in \cD_\alpha.$$
 \noindent For $s\in (0,1)$, the level set of  $\varphi$ is given by
$$E_\varphi(s)=\{\zeta\in \TT \text{ : }  |\varphi(\zeta)|\geq s\},$$
The contact set of $\varphi$ is $E_\varphi:=E_\varphi(1)$.\\
Let $\cH$ be a Hillbert space, a compact operator is said to belong in the Schatten class $\cS_p(\cH)$ if its sequence of singular numbers is in
the sequence space $\ell^p$.\\
  In section 2, we give a simple proof of Luecking's Theorem \cite{L,Z} about the  characterization for $p$--Schatten class of Toeplitz operators for $p\geq 1$. In section 3, we give a simple sufficient condition, in terms of the level set, which ensures the membership to $ {\cS}_p(\hH)$. This approach allows us to give explicit examples of composition operator belonging to Schatten classes. If the symbol is outer and the contact set is reduced to one point, we give an explicit complete characterization  to the membership to $ {\cS}_p(\hH)$.
  In the last section, we study the size of contact set of $\varphi$, when $C_{\varphi} \in {\cS}_p({\cD}_\alpha)$.
For a treatment of some questions addressed in this paper see also \cite{EKSY, EIN, GG, GG1,LLQP1,LLQP2,LLQP3, QS}.
\\

The notation $A\lesssim B$ means that there is a constant $C$ independent of the
relevant variables such that $A \leq CB$. We write   $A\asymp B$  if both $A\lesssim B$ and $B\lesssim A$.

\section{Luecking characterization for Schatten class of Toeplitz operators}

\subsection{Toeplitz operators on the Bergman spaces}
Let $\alpha>-1$. We denote by  $\cA_\alpha$ the Bergman space consisting of analytic functions $f$ on $\DD$  such that
$$ \|f\|_{\cA_\alpha}^2=\int_\DD|f(z)|^2dA_\alpha(z)<\infty.$$
The reproducing kernel of $\cA_\alpha$ is given by
$$K_w(z)=\frac{1}{(1-\bar{w}z)^{2+\alpha}},\qquad z,w\in \DD.$$
So we have
\begin{equation}\label{noyau}
f(w)=\langle f, K_w\rangle_{\cA_\alpha},\qquad f\in \cA_\alpha.
\end{equation}
In particular,
$$
\|K_w\|_{\cA_\alpha}^2= K_w(w)= \frac{1}{(1-|w|^2)^{2+\alpha}}.
$$
For a positive measure $\mu$ on the unit disc we associate the operator $\bT$ defined on the Bergman space $\cA_\alpha$ by
$${\bT} (f)(z):=\int_{\DD}K_w(z)f(w)d\mu(w)=\int_\DD\frac{f(w)}{(1-\bar{w}z)^{2+\alpha}} d\mu(w), \qquad f\in \cA_\alpha.$$
Let us denote $ k_w=K_w/\|K_w\|$ the normalized reproducing kernel at $w$. The Berezin transform of $\bT$  is given  by
$${\bB}(z):=\langle \bT k_z, k_z\rangle_{\cA_\alpha}=\int_\DD\frac{(1-|z|^2)^{2+\alpha}}{|1-\overline{w}z|^{4+2\alpha}} d\mu(w).$$
The hyperbolic measure in $\DD$ is given by
$$d\lambda(w)=(1-|w|^2)^{-2}dA(w).$$
It satisfies
\begin{equation}
\label{lambdaeq}
\displaystyle \int _{\DD} |\langle k _w, f\rangle_{\cA_\alpha}|^2 {{d}
\lambda(w)}= \frac{1}{1+\alpha}\|f\|_\alpha ^2,\qquad f\in \cA_\alpha.
\end{equation}
The dyadic decomposition of $\DD$ is the family $(R_{n,j})$ given by
$$
R_{n,j}=\Big\{ re^{i\theta} \in \DD: \ r_n\leq r<r_{n+1}, \frac{2\pi j}{2^n}\leq \theta < \frac{2\pi(j+1)}{2^n}\Big\}$$
 where $ j=0,1,..,2^{n}-1$ and
where $1-r_n=2^{-n}$.\\
Using the dyadic decomposition, it is not difficult to prove that $\bB\in L^p(\DD,d\lambda)$ if and only if
$$\displaystyle \sum_{n\geq 0}2^{(2+\alpha)np}\sum_{j=0}^{2^{n}-1}\mu(R_{n,j})^p < \infty.$$
For details see \cite {WX}.\\
The following result is due to Luecking \cite{L} ( an alternative proof is given by Zhu \cite{Z}). Here we give a simple proof of this result.
\begin{thm}\label{lemtopelitz}Let $p\geq 1$.  The following assertions are equivalent.
\begin{enumerate}
\item[(i)]$\bT \in \cS_p(\cA_\alpha)$,
\item[(ii)]$\bB\in L^p(\DD,d\lambda)$.
\end{enumerate}
\end{thm}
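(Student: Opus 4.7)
The approach relies on the spectral decomposition of $\bT$—which is positive self-adjoint since $\mu \geq 0$—together with the reproducing-kernel identity \eqref{lambdaeq}. For the direction (i) $\Rightarrow$ (ii), diagonalise $\bT = \sum_n \lambda_n \, e_n \otimes e_n$ in an orthonormal basis of eigenvectors, so that $\|\bT\|_{\cS_p}^p = \sum_n \lambda_n^p$ and
$$
\bB(z) = \langle \bT k_z, k_z\rangle_{\cA_\alpha} = \sum_n \lambda_n |\langle k_z, e_n\rangle|^2.
$$
Since $\sum_n |\langle k_z, e_n\rangle|^2 = \|k_z\|^2 = 1$, Jensen's inequality applied to the convex map $t \mapsto t^p$ with these weights gives $\bB(z)^p \leq \sum_n \lambda_n^p |\langle k_z, e_n\rangle|^2$. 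Integrating against $d\lambda$ and using \eqref{lambdaeq} yields
$$
\int_{\DD} \bB^p \, d\lambda \leq \sum_n \lambda_n^p \int_{\DD} |\langle k_z, e_n\rangle|^2 \, d\lambda(z) = \frac{1}{1+\alpha} \|\bT\|_{\cS_p}^p,
$$
which is the claimed inequality.

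For the converse (ii) $\Rightarrow$ (i), I would use the dyadic characterisation recalled just above the theorem, $\bB \in L^p(\DD, d\lambda) \iff \sum_{n,j} 2^{(2+\alpha)np} \mu(R_{n,j})^p < \infty$, and aim for $\|\bT\|_{\cS_p}^p \lesssim \sum_{n,j} 2^{(2+\alpha)np} \mu(R_{n,j})^p$. Decompose $\mu = \sum \mu_{n,j}$ with $\mu_{n,j} = \mu|_{R_{n,j}}$. Because $K_w$ is essentially constant on each cell $R_{n,j}$, the operator $T_{\mu_{n,j}}$ is well approximated by the rank-one operator $\mu(R_{n,j}) \|K_{z_{n,j}}\|^2 \, k_{z_{n,j}} \otimes k_{z_{n,j}}$ (here $z_{n,j}$ is the centre of $R_{n,j}$), whose $\cS_p$-norm is $\asymp 2^{(2+\alpha)n} \mu(R_{n,j})$. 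To assemble these pieces into a $p$-th power bound on $\bT$, I would partition the index set $\{(n,j)\}$ into finitely many sub-collections whose centres are pairwise far apart in the pseudo-hyperbolic metric; for each such sub-collection the Gram matrix $(\langle k_{z_{n,j}}, k_{z_{m,k}}\rangle)$ is nearly diagonal (controlled by a Schur test), so the associated sum of rank-one operators has $\cS_p$-norm comparable to the $\ell^p$-norm of its coefficients, which gives the required estimate after summing over the finitely many sub-collections.

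The main obstacle is precisely this second direction, and more specifically the Schur-test control over the off-diagonal inner products $\langle k_{z_{n,j}}, k_{z_{m,k}}\rangle$ when cells are spatially close. The endpoint $p = 1$ bypasses the difficulty entirely, reducing to the trace identity $\operatorname{tr}(\bT) = \int_{\DD} \|K_w\|^2 \, d\mu(w) \asymp \int_{\DD} \bB \, d\lambda$ (both sides computed by Fubini). This clean endpoint, together with the positivity of $\bT$, also suggests an alternative proof by (real) interpolation in the Schatten scale between $p = 1$ and the boundedness endpoint, which might yield a conceptually shorter argument than the dyadic/Schur route above.
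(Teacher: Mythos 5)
Your direction (i) $\Rightarrow$ (ii) is correct and is verbatim the paper's argument: spectral decomposition of the positive operator $\bT$, Jensen's inequality with the weights $|\langle k_w,e_n\rangle|^2$ (which sum to at most $\|k_w\|^2=1$), and the identity \eqref{lambdaeq}. No issue there.

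The problem is the converse, which is the substantive half of the theorem, and your proposal does not actually prove it. What you sketch --- localising $\mu$ to the dyadic cells $R_{n,j}$, replacing each $T_{\mu|_{R_{n,j}}}$ by a rank-one operator along $k_{z_{n,j}}$, splitting into hyperbolically separated sub-families and controlling the Gram matrix by a Schur test --- is essentially Luecking's original strategy, and it can be pushed through, but the step you yourself flag as ``the main obstacle'' (showing that for a separated family the $\cS_p$-norm of $\sum_j c_j\, k_{z_j}\otimes k_{z_j}$ is comparable to $\|(c_j)\|_{\ell^p}$, plus controlling the error in the rank-one approximation) is precisely where all the work lies, and it is left undone. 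The trace identity at $p=1$ and the interpolation remark are reasonable observations but are not developed into an argument either; in particular the interpolation route requires setting up the right endpoint spaces of measures and is not immediate. The paper avoids all of this with one pointwise inequality (Lemma~\ref{besovestimate}): for any $f\in\cA_\alpha$,
$$
|f(z)|^2\leq C_\alpha\int_\DD \frac{(1-|w|^2)^{2+\alpha}}{|1-\overline{w}z|^{4+2\alpha}}\,|f(w)|^2\,dA_\alpha(w).
$$
Integrating this against $d\mu$ and using Fubini gives $\langle \bT f,f\rangle=\int_\DD|f|^2\,d\mu\leq C_\alpha\int_\DD \bB(w)\,|f(w)|^2\,dA_\alpha(w)$; applying this to each element $f_n$ of an arbitrary orthonormal basis, using Jensen with the probability measure $|f_n|^2dA_\alpha$, and summing via $\sum_n|f_n(w)|^2=\|K_w\|^2_{\cA_\alpha}$ yields $\sum_n\langle \bT f_n,f_n\rangle^p\leq C_\alpha^p\|\bB\|_{L^p(d\lambda)}^p$ uniformly over orthonormal bases, which for a positive operator and $p\geq 1$ is exactly membership in $\cS_p$. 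That single kernel estimate is the missing idea in your write-up; without it (or a completed version of the dyadic/almost-orthogonality argument) the direction (ii) $\Rightarrow$ (i) remains unproved.
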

For the proof,  we need the following key inequality (see \cite[Lemma 2.1]{EKSY}).
\begin{lem}\label{besovestimate}Let $f\in \cA_\alpha^2$.
Then there exists a constant $C$, depending only on $\alpha$,  such that
\begin{equation}\label{lemmeEKSY}
|f(z)|^2\leq  C_\alpha\int_\DD \frac{(1-|w|^2)^{2+\alpha}}{|1-\overline{w}z|^{4+2\alpha}}|f(w)|^2 \, dA_{\alpha}(w), \quad z\in \DD.
\end{equation}
\end{lem}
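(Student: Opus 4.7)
The plan is to derive the pointwise estimate by applying a reproducing kernel identity to the holomorphic function $f^2$, in a weighted Bergman space whose kernel exponent is chosen to exactly match the right-hand side of the lemma.

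First, I recall the reproducing formula in $\cA_\beta$ for $\beta>-1$: for any holomorphic $g$ with $\int_{\DD}|g|\,dA_\beta<\infty$, one has
\begin{equation*}
g(z)=\int_{\DD}\frac{g(w)}{(1-\bar{w}z)^{2+\beta}}\,dA_\beta(w)
=(1+\beta)\int_{\DD}\frac{(1-|w|^2)^{\beta}}{(1-\bar{w}z)^{2+\beta}}\,g(w)\,dA(w).
\end{equation*}
The idea is to pick $\beta$ so that the kernel exponent $2+\beta$ equals $4+2\alpha$, i.e.\ $\beta=2+2\alpha$. I then apply this identity with $g=f^2$, which is holomorphic on $\DD$. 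To make the application legitimate I need $f^2\in L^1(dA_{2+2\alpha})$; since $\alpha>-1$ gives $2+2\alpha\geq \alpha$ on $\DD$, the factor $(1-|w|^2)^{2+2\alpha}\leq (1-|w|^2)^{\alpha}$, so $\int_{\DD}|f|^2\,dA_{2+2\alpha}\lesssim \|f\|_{\cA_\alpha}^2<\infty$. This integrability check is the one place where a little care is needed, but it is immediate.

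With this choice of $\beta$, the reproducing formula yields
\begin{equation*}
f(z)^2=(3+2\alpha)\int_{\DD}\frac{(1-|w|^2)^{2+2\alpha}}{(1-\bar{w}z)^{4+2\alpha}}\,f(w)^2\,dA(w).
\end{equation*}
Taking absolute values under the integral and rewriting $(1-|w|^2)^{2+2\alpha}\,dA(w)=\frac{1}{1+\alpha}(1-|w|^2)^{2+\alpha}\,dA_\alpha(w)$ gives exactly
\begin{equation*}
|f(z)|^2\leq \frac{3+2\alpha}{1+\alpha}\int_{\DD}\frac{(1-|w|^2)^{2+\alpha}}{|1-\bar{w}z|^{4+2\alpha}}\,|f(w)|^2\,dA_\alpha(w),
\end{equation*}
which is the claimed inequality with $C_\alpha=(3+2\alpha)/(1+\alpha)$. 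So the only real step is identifying the correct shifted weight $\beta=2+2\alpha$; after that the proof is a line of bookkeeping. (An alternative route would use subharmonicity of $|f|^2$ on a Euclidean disc $D(z,c(1-|z|^2))$ together with the comparabilities $1-|w|^2\asymp 1-|z|^2\asymp |1-\bar{w}z|$ on such a disc, but the reproducing-kernel approach above avoids any localization estimate and yields an explicit constant.)
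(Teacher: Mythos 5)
Your proof is correct. Note first that the paper itself does not prove this lemma: it is simply quoted from \cite{EKSY} (Lemma 2.1 there), so there is no in-paper argument to compare against. Your derivation --- applying the reproducing identity of the weighted Bergman space with weight $\beta=2+2\alpha$ to the holomorphic function $f^2$, so that the kernel exponent $2+\beta$ becomes exactly $4+2\alpha$, and then rewriting $(1-|w|^2)^{2+2\alpha}\,dA(w)$ as $(1-|w|^2)^{2+\alpha}\,dA_\alpha(w)/(1+\alpha)$ --- is a complete, self-contained proof and even produces the explicit constant $C_\alpha=(3+2\alpha)/(1+\alpha)$. The one step that genuinely needs justification is the one you flag: the reproducing formula must be invoked for a holomorphic function that is only known to lie in $L^1(dA_{2+2\alpha})$, not in the Hilbert space $\cA_{2+2\alpha}$. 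That version of the formula is standard (apply the $L^2$ reproducing property to the dilations $g_r(w)=g(rw)$ and pass to the limit using $L^1(dA_\beta)$-convergence of dilations of integrable holomorphic functions), and your integrability check $\int_\DD|f|^2\,dA_{2+2\alpha}\lesssim\|f\|^2_{\cA_\alpha}$, via $(1-|w|^2)^{2+2\alpha}\leq(1-|w|^2)^{\alpha}$, is exactly what is needed to apply it. For comparison, the more common proof of estimates of this type (and the likely route in the cited source) uses the sub-mean-value property of $|f|^2$ on a disc $D(z,c(1-|z|))$ together with the comparabilities $1-|w|^2\asymp|1-\bar{w}z|\asymp 1-|z|^2$ there; your kernel argument avoids that localization entirely and is, if anything, cleaner.
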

\begin{proof} Let $f_n$ be an orthonormal basis of $\cA_\alpha$. We have by \eqref{noyau} and \eqref{lemmeEKSY},
\begin{align*}\sum_n \langle \bT f_n, f_n\rangle_{\cA_\alpha}^{p}= &{\sum_{n\geq 1}\Big( \int_\DD|f_n(z)|^2 d\mu(z)} \Big)^p\nonumber\\
\leq &C_\alpha^p \sum_{n\geq 1}\Big( \int_\DD \Big[  \int_\DD \frac{(1-|w|^2)^{2+\alpha}}{|1-\overline{w}z|^{4+2\alpha}}|f_n(w)|^2dA_\alpha(w) \Big]{ d\mu(z) } \Big)^p \nonumber\\
=& C_\alpha^p \sum_{n\geq 1}\Big( \int_\DD\bB(w)|f_n(w)|^2 dA_\alpha(w)\Big)
^p\nonumber\\
=&C_\alpha^p  \sum_{n\geq 1}\Big( \int_\DD\bB(w)d\nu_n(w)\Big)^p,
\label{leuk1}
\end{align*}
where $d\nu_n(w)=|f_n(w)|^2dA_\alpha(w)$. Note that $\nu_n(\DD)=\|f_n\|^2_{\cA_\alpha}=1$. By \eqref{lambdaeq} and Jensen's inequality, we have
\begin{eqnarray*}
\sum_n \langle \bT f_n, f_n\rangle_{\cA_\alpha}^{p}&\leq& C_\alpha^p
\sum_{n\geq 1} \int_\DD(\bB(w))^p |f_n(w)|^2dA_\alpha(w) \\
&=&C_\alpha^p \int_\DD(\bB(w))^p\sum_{n\geq 1}| f_n(w)|^2dA_\alpha(w)\\
&=&C_\alpha^p\; \|\bB\|_{L^p(\DD,d\lambda)}^p.
\end{eqnarray*}
The last equality comes from the fact that
$$
\displaystyle \sum_{n\geq 1}| f_n(w)|^2= \displaystyle \sum_{n\geq 1}\langle  K_w, f_n\rangle_{\cA_\alpha}^2 = \|K_w\|_{\cA_\alpha}^2= \frac{1}{(1-|w|^2)^{2+\alpha}}
$$
Conversely, since  $\bT\in \cS_{p}(\cA_\alpha)$, let  $ (s_n)_{n\geq 0}$ be the singular values of $\bT$ and $(e_n)_{n\geq 0}$ be the orthonormal sequence of the eigenfunctions of $\bT$ associated to $(s_n)_{n\geq 0}$. Using the spectral decomposition of $\bT$ ($\bT= \displaystyle \sum _{n\geq 1}s_n\langle \cdot ,e_n\rangle e_n$) and Jensen's inequality we obtain

\begin{eqnarray*}
\|\widetilde{\mu}\|_{L^p(\DD,d\lambda)}^{p}&=&
\int_\DD|\langle \bT  k_w, k_w\rangle_{\cA_\alpha}|^pd\lambda(w)\\
&=& \int_\DD\Big(\sum_{n\geq 0}s_n|\langle  k_w,e_n\rangle_{\cA_\alpha}|^2\Big)^p d\lambda(w) \\
&\leq & \int_\DD\sum_{n\geq 0}s_n^p|\langle  k_w,e_n\rangle_{\cA_\alpha}|^2 d\lambda(w)\\
&=& \frac{1}{1+\alpha} \sum_{n\geq 0}s_n^p.
\end{eqnarray*}
This  completes the proof.
\end{proof}

\subsection{Composition operators}
 Let $\alpha\geq 0$ and $\varphi  \in \Hol(\DD)$ such that $\varphi  (\DD)\subset \DD$.  The generalized counting Nevanlinna function of $\varphi$ is defined by
$$N_{\varphi  ,\alpha}(z)=\sum_{w\in \DD\text{ : } \varphi  (w)=z}(1-|w|)^\alpha, \qquad (z\in\DD).$$
Note that $N_{\varphi,0}(z)=n_\varphi(z)$ is the multiplicity of $\varphi$ at $z$  and $N_{\varphi,1}.=N_\varphi$  is equivalent to the usual  Nevanlinna counting function associated to $\varphi$.  For a Borel subset $\Omega$ of $\DD$,
we put
$$\mu_{\varphi,\alpha}(\Omega)=\int_{{\Omega}} N_{\varphi,\alpha}(z) dA(z).$$

It is well known that $C_\varphi\in \cS_p(\cD_\alpha)$ if and only if $C_\varphi^*C_\varphi \in \cS_{p/2}(\cD_\alpha)$. By a routine calculation (see \cite{WX}), there exists a rank one operator $R$ on $\cA _{\alpha}$ such that $C_\varphi^*C_\varphi$ and $\mathbf{T} _{\mu_{\varphi,\alpha}}+ R$ are similar. It implies that $C_\varphi\in \cS_p(\cD_\alpha)$ if and only if $\mathbf{T} _{\mu_{\varphi,\alpha}} \in \cS_{p/2}(\cA_\alpha)$, and the following result can be deduced easily from Theorem \ref {lemtopelitz} (see also \cite{WX}). \\
\begin{cor}\label{leucking} Let $p\geq 2$, $0\leq \alpha\leq 1$ and $\varphi$ holomorphic self-map on $\DD$. The following assertions are equivalent

\noindent (i) $C_\varphi\in \cS_p(\cD_\alpha)$,\\

\noindent (ii) $\displaystyle \sum_n 2^{(2+\alpha)np/2}\sum_{j=0}^{2^{n}-1} \Big[\mu_{\varphi,\alpha}(R_{n,j})\Big]^{p/2}<\infty$,\\

\noindent (iii) ${\widetilde{\mu }_{\varphi, \alpha}}\in L^{p/2} d\lambda(w).$

\end{cor}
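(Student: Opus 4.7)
The plan is to reduce this corollary to Theorem \ref{lemtopelitz} applied to the concrete measure $\mu_{\varphi,\alpha}$ on $\cA_\alpha$. Three ingredients do all the work: the standard reduction $C_\varphi\in\cS_p \Leftrightarrow C_\varphi^*C_\varphi\in\cS_{p/2}$ (which follows from $s_n(C_\varphi^*C_\varphi)=s_n(C_\varphi)^2$), the similarity between $C_\varphi^*C_\varphi$ on $\cD_\alpha$ and $\bT_{\mu_{\varphi,\alpha}}+R$ on $\cA_\alpha$ with $R$ of rank one, and the dyadic decomposition of $\bB_{\varphi,\alpha}$ that was already recorded just above Theorem \ref{lemtopelitz}.

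To establish the similarity I would identify $\cD_\alpha$ with $\CC\oplus\cA_\alpha$ via $f\mapsto(f(0),f')$; under this identification the $\cD_\alpha$ norm of $C_\varphi f$ is computed by a Stanton-type change of variable:
$$\|C_\varphi f\|_{\cD_\alpha}^2 \asymp |f(\varphi(0))|^2+\int_\DD |f'(w)|^2\,d\mu_{\varphi,\alpha}(w),$$
the Jacobian change producing exactly $N_{\varphi,\alpha}(w)\,dA(w)=d\mu_{\varphi,\alpha}(w)$. In particular the quadratic form $\langle C_\varphi^*C_\varphi f,f\rangle_{\cD_\alpha}$ splits as a rank one contribution at the origin plus the Toeplitz form $\langle \bT_{\mu_{\varphi,\alpha}}f',f'\rangle_{\cA_\alpha}$, which is what \cite{WX} encodes as a bounded invertible intertwining. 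Because similarity by bounded invertibles preserves all Schatten ideals, this gives $C_\varphi\in\cS_p(\cD_\alpha)\Leftrightarrow \bT_{\mu_{\varphi,\alpha}}+R\in\cS_{p/2}(\cA_\alpha)$.

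Next I would remove the rank one perturbation. Since $p\geq 2$ we have $p/2\geq 1$, so $R\in\cS_1\subset\cS_{p/2}$ and the ideal property of $\cS_{p/2}$ yields $\bT_{\mu_{\varphi,\alpha}}+R\in\cS_{p/2}(\cA_\alpha)\Leftrightarrow \bT_{\mu_{\varphi,\alpha}}\in\cS_{p/2}(\cA_\alpha)$. Applying Theorem \ref{lemtopelitz} with $\mu=\mu_{\varphi,\alpha}$ and exponent $p/2$ gives $\bT_{\mu_{\varphi,\alpha}}\in\cS_{p/2}(\cA_\alpha)\Leftrightarrow \bB_{\varphi,\alpha}\in L^{p/2}(\DD,d\lambda)$, which is exactly (i)$\Leftrightarrow$(iii). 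The equivalence (ii)$\Leftrightarrow$(iii) is the dyadic characterization of membership of a Berezin transform in $L^{p/2}(\DD,d\lambda)$ quoted from \cite{WX} just before Theorem \ref{lemtopelitz}, specialized to $\mu_{\varphi,\alpha}$.

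The only genuinely delicate point is the similarity step: one must verify that the Stanton identity is not merely an equivalence of norms but is implemented by a bounded invertible operator between Hilbert spaces, so that Schatten class membership passes through. Once that intertwining is in hand, the rest is bookkeeping and a direct invocation of Theorem \ref{lemtopelitz}.
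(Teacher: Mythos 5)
Your proposal is correct and follows essentially the same route as the paper: the reduction $C_\varphi\in\cS_p\Leftrightarrow C_\varphi^*C_\varphi\in\cS_{p/2}$, the similarity of $C_\varphi^*C_\varphi$ with $\mathbf{T}_{\mu_{\varphi,\alpha}}+R$ ($R$ of rank one) via the change-of-variables formula, and then Theorem~\ref{lemtopelitz} together with the dyadic description of $L^{p/2}(\DD,d\lambda)$ membership of the Berezin transform. The paper simply cites \cite{WX} for the similarity step that you spell out; otherwise the arguments coincide.
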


\begin{rem}\label{llqp}
\noindent {\bf 1.}  Let $g$  be a positive  measurable function on $\DD$ and  a holomorphic self-map  $\varphi$ on $\DD$. By the change of variables formula \cite{S} we have
$$
\int_\DD (g\circ\varphi  ) (z)|\varphi  '(z)|^2dA_\alpha(z)=(1+\alpha)\int_\DD g(z) N_{\varphi  ,\alpha}(z)dA(z).
$$
Hence  the condition $(iii)$ becomes
\begin{equation}\label{changevar}
 \mathcal{I}_{\alpha,p}(\varphi)=(1+\alpha)^{p/2}\int_{\DD}\Big(\int_\DDÊ\frac{(1-|w|^2)^{2+\alpha}}{|1-\overline{w}z|^{4+2\alpha}}N_{\varphi,\alpha}(z)dA(z)\Big)^{p/2}{d\lambda(w)} <\infty.
\end{equation}
\noindent {\bf  2}. The pull back measure associated to $\varphi$ is defined by
$$m_\varphi(B):=|\{\zeta\in \TT\text{ : } \varphi(\zeta)\in B \text{ a.e}\}|, $$
here $B$ is a   Borel subset for $\overline{\D}$ and $|E|$
denotes the normalized Lebesgue measure of a Borelian subset $E$ of  $\TT$.

In  \cite{LLQP2}, Lef\`evre, Li, Queff\'elec and Rodriguez-Piazza showed that the classical  Nevanlinna counting function and the pull buck measure are connected as follows :  There exists two universal constants $C_1,C_2$ such that
 $$m_\varphi(W(\zeta, C_1h)\lesssim \sup_{z\in W(\zeta,h)\cap \DD}N_\varphi(z)\lesssim m_\varphi(W(\zeta,C_2 h)),\qquad \zeta\in \TT,\; h\in (0,1) $$
 where  $W(\zeta,h)=\{z\in \DD  \text{ : } 1-h\leq |z|<1 \text{ and } |\arg(z\bar\zeta)|\leq h\}$ are the Carleson boxes.

 Clearly, one can  formulate the membership to Schatten classes, in the case of the Hardy space, in terms of the pull back measure as follows,
$$C_\varphi\in \cS_p(\hH )\iff
\displaystyle \sum_n 2^{np/2}\sum_{j=0}^{2^{n}-1} \Big[m_{\varphi}(R_{n,j})\Big]^{p/2}<\infty.$$
Let  $W_{n,j}$ the dyadic Carleson box given by
$$W_{n,j}=\big\{z=re^{i\theta}\in \D  \text{ : }  1-2^{-n}\leq|z|<1\text{ and }  \frac{2\pi j}{2^n}\leq \theta < \frac{2\pi(j+1)}{2^n}\big\}, $$
where  $j=0,1,..,2^{n}-1.$
It was remarked in \cite{LLQP3} that
$$\displaystyle \sum_n 2^{np/2}\sum_{j=0}^{2^{n}-1} \Big[m_{\varphi}(R_{n,j})\Big]^{p/2}<\infty \iff
\displaystyle \sum_n 2^{np/2}\sum_{j=0}^{2^{n}-1} \Big[m_{\varphi}(W_{n,j})\Big]^{p/2}<\infty.$$
In this paper we will also use the earlier characterization of compactness du to B. MacCluer and J. Shapiro. They showed in  \cite{MCS} that $C_\varphi$  is compact on $\hH $ if and only if
$$ \sup_{\zeta\in \TT} m_\varphi(W(\zeta, h)) =o(h)(h\to 0).$$
\end{rem}
\section{Membership to $\cS_p(\hH )$}
\subsection{Membership to $\cS_p(\hH )$ through level sets}
 Note that   $C_\varphi  $ is in the Hilbert-Schmidt class in $\hH $ (i.e.   $C_\varphi  \in {\cS}_2(\hH ))$  if and only if
$$\sum_{n \geq 0}\|\varphi  ^n\|^{2}_{2 }=\frac{1}{2\pi}\int_\TT\frac{|d\zeta|}{1-|\varphi  (\zeta)|^2}\asymp \int_{0}^{1}\frac{|E_\varphi  (s)|}{(1-s)^2}\, ds<\infty.$$
Then the membership of composition operators to ${\cS}_2(\hH )$ is completely described by the level sets of their symbols. For $p>2$, it is proved in \cite {LLQP1} that there exists two symbols $\varphi, \psi$ such that
$|E_{\varphi}(r)|= |E_{\psi}(r)|$  for $r\in (0,1]$, $C_{\varphi}\in {\cS}_p({\hH})$ and $C_{\psi}\notin {\cS}_p({\hH})$. In the following proposition we give a sufficient condition in terms of the level sets which ensures the membership to Schatten classes. This allows us to give new examples of operators in ${\cS}_p({\hH})\setminus {\cS}_2({\hH})$ for $p>2$.
\begin{prop}\label{shat} Let $p\geq 2$. 
If
$$\int_0^1\frac{|E_\varphi(s)|^{p/2}}{(1-s)^{1+p/2}}ds<\infty, $$
then  $C_\varphi\in S_p(\hH )$.
\end{prop}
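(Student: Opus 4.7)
The idea is to invoke the dyadic Carleson-box criterion for $\cS_p(\hH)$ recalled in Remark~\ref{llqp} and then to sum out the angular index: thanks to $p\ge 2$, this converts the criterion into a discrete version of the integral hypothesis.

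By Remark~\ref{llqp}, it suffices to establish
$$\sum_n 2^{np/2}\sum_{j=0}^{2^n-1}[m_\varphi(W_{n,j})]^{p/2}<\infty.$$
For each fixed $n$, the boxes $(W_{n,j})_{0\le j<2^n}$ form a disjoint tiling of the annulus $A_n=\{z\in\DD:1-2^{-n}\le|z|<1\}$, so
$$\sum_{j=0}^{2^n-1}m_\varphi(W_{n,j})=m_\varphi(A_n)\le|E_\varphi(1-2^{-n})|.$$
Since $p/2\ge 1$, the elementary concavity-type inequality $\sum_j a_j^{p/2}\le(\sum_j a_j)^{p/2}$ for non-negative $a_j$ then yields
$$\sum_{j=0}^{2^n-1}[m_\varphi(W_{n,j})]^{p/2}\le|E_\varphi(1-2^{-n})|^{p/2},$$
reducing the problem to showing $\sum_n 2^{np/2}|E_\varphi(1-2^{-n})|^{p/2}<\infty$.

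The remaining step is a routine dyadic--integral comparison: because $s\mapsto|E_\varphi(s)|$ is non-increasing, splitting $(0,1)$ into the shells $1-s\in[2^{-n-1},2^{-n}]$ and using $(1-s)\asymp 2^{-n}$ on each such shell gives
$$\int_0^1\frac{|E_\varphi(s)|^{p/2}}{(1-s)^{1+p/2}}\,ds\gtrsim\sum_n 2^{np/2}|E_\varphi(1-2^{-n})|^{p/2}$$
up to an index shift and a fixed multiplicative constant, so the hypothesis forces the desired sum to be finite.

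The only substantive step is the $\ell^{p/2}\hookrightarrow\ell^1$-type inequality used above; it is what allows level-set data to absorb the angular index and is precisely where the assumption $p\ge 2$ enters. I do not expect any real obstacle beyond keeping the dyadic bookkeeping straight, though it is worth noting that the result of Lef\`evre--Li--Queff\'elec--Rodr\'{\i}guez-Piazza recalled just before the statement shows that no converse can be hoped for when $p>2$.
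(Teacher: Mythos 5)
Your proposal is correct and follows essentially the same route as the paper: reduce to the dyadic criterion $\sum_n 2^{np/2}\sum_j [m_\varphi(W_{n,j})]^{p/2}<\infty$ from Remark~\ref{llqp}, absorb the angular index via $\sum_j a_j^{p/2}\le(\sum_j a_j)^{p/2}$ (where $p\ge 2$ enters), and compare the resulting dyadic sum with the integral hypothesis using the monotonicity of $s\mapsto|E_\varphi(s)|$. Your explicit handling of the index shift in the shell comparison is if anything slightly more careful than the paper's, which pairs $|E_\varphi(1-2^{-n})|$ with the shell on the wrong side; no substantive difference.
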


\begin{proof}  Since
$|E_\varphi(1-2^{-n})|= \sum_{j=0}^{2^n-1}m_\varphi(W_{n,j}), $
we get
$$ \sum_{j=0}^{2^n-1}m_\varphi(W_{n,j})^{p/2}\leq |E_\varphi(1-2^{-n})|^{p/2}.
$$
Hence,
\begin{eqnarray*}
\sum_n2^{np/2 }\sum_{j=0}^{2^n-1}m_\varphi(W_{n,j})^{p/2}
&\leq& \sum_{n}|E_\varphi(1-2^{-n})|^{p/2}\int_{1-2^{-n}}^{1-2^{-n-1}} \frac{ds}{(1-s)^{1+p/2}}\\
&\leq& \int_0^1\frac{|E_\varphi(s)|^{p/2}}{(1-s)^{1+p/2}}ds<\infty.
\end{eqnarray*}
By Remarks \ref{llqp}.2, we obtain $C_\varphi\in S_p(\hH )$.
\end{proof}
\begin{rem}\label{rem-}
If $C_\varphi\in S_p(\hH )$, then
$$\int_{0}^{1}\frac{|E_\varphi(r)|^{p/2}}{(1-r)^2}<\infty.$$
Indeed, write   $|E_\varphi(1-2^{-n})|= 2^{{n}}(\sum_{j=0}^{2^n-1}2^{-n}m_\varphi(W_{n,j})).$
So by Jensen's inequality, $$|E_\varphi(1-2^{-n})|^{p/2}\leq  2^{np/2 -n}\sum_{j=0}^{2^n-1}m_\varphi(W_{n,j})^{p/2}.$$
Since $C_\varphi\in S_p(\hH )$, by Remark \ref{llqp}.2, we have
$$
\int_0^{1}\frac{|E_\varphi(r)|^{p/2}}{(1-r)^2}\leq  \sum_n 2^n |E_\varphi(1-2^{-n})|^{p/2} \leq \sum_n2^{np/2 }\sum_{j=1}^{2^n}m_\varphi(W_{n,j})^{p/2}<\infty.
$$
\end{rem}

Now we are able to give some concrete examples.
Let $K$ be a closed subset of the unit circle $\T$.
Fix a non-negative function $h\in C^{1}[0,\pi]$  such that $h(0)=0$. We consider the outer function defined by
 \[
f_{h,K}(z) = \exp \bigg(- \int_{\TT} \frac{\zeta+z}{\zeta-z} \, h(d(\zeta, K)) \,\, |d\zeta| \bigg),
\]
where $d$ denotes the arc-length distance.
It is known that the non tangential limit of $f_{h,K}$ satisfies
\begin{equation}\label{exterieure}
|f_{h,K}(\zeta)|=e^{-h(d(\zeta,K))},\qquad \text{ a.e. on }\T.
\end{equation}
Given  $K\subset \TT$ and $t>0$, let us write $K_t=\{\zeta\text{ : } d(\zeta,K)\leq t\}$ and $|K_t|$
 denotes the Lebesgue measure of $K_t$.
 \begin{cor} \label {example} Let $p\geq 2$ and let $\varphi=f_{h,K}$.
\begin{enumerate}
\item If $$\int_0\frac{h'(t)}{h(t)^{1+p/2}}|K_t|^{p/2}dt<\infty,$$
then $C_{\varphi}\in S_p(\hH )$.
\item If  $C_{\varphi}\in S_p(\hH )$ then
$$\int_0\frac{h'(t)}{h(t)^{2}}|K_t|^{p/2}dt<\infty.$$
\end{enumerate}
\end{cor}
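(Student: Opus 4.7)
The plan is to reduce both statements to a single change of variables that relates the level sets $E_\varphi(s)$ to the enlargements $K_t$, and then feed the resulting integrals into Proposition \ref{shat} (for the sufficient condition) and Remark \ref{rem-} (for the necessary condition).

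First I would compute $E_\varphi(s)$ explicitly. From \eqref{exterieure} we have $|\varphi(\zeta)| = e^{-h(d(\zeta,K))}$ a.e., so assuming (as is implicit in the hypothesis, since $h'(t)/h(t)^\beta$ governs the behaviour at $0$) that $h$ is strictly increasing with $h(0)=0$, I get
\[
E_\varphi(s) \;=\; \bigl\{\zeta\in\TT : d(\zeta,K)\leq h^{-1}(-\log s)\bigr\} \;=\; K_{h^{-1}(-\log s)}.
\]
Next I perform the change of variable $s = e^{-h(t)}$, so that $ds = -h'(t)e^{-h(t)}\,dt$; as $s\to 1^-$ we have $t\to 0^+$ and $1-s = 1-e^{-h(t)} \asymp h(t)$. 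Since only the behaviour near $s=1$ (equivalently, $t=0$) matters in the two convergence criteria, this substitution yields
\[
\int_{0}^{1}\frac{|E_\varphi(s)|^{p/2}}{(1-s)^{1+p/2}}\,ds \;\asymp\; \int_{0}\frac{h'(t)}{h(t)^{1+p/2}}\,|K_t|^{p/2}\,dt
\]
and, analogously,
\[
\int_{0}^{1}\frac{|E_\varphi(s)|^{p/2}}{(1-s)^{2}}\,ds \;\asymp\; \int_{0}\frac{h'(t)}{h(t)^{2}}\,|K_t|^{p/2}\,dt.
\]

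With these two equivalences in hand, part (1) follows at once from Proposition \ref{shat}: the finiteness of the right-hand integral in the first display is exactly the hypothesis of the proposition applied to $\varphi=f_{h,K}$, so $C_\varphi\in\cS_p(\hH)$. For part (2), Remark \ref{rem-} says that $C_\varphi\in\cS_p(\hH)$ forces $\int_0^1|E_\varphi(r)|^{p/2}(1-r)^{-2}\,dr<\infty$, which by the second equivalence above is the stated integrability of $h'(t)|K_t|^{p/2}/h(t)^2$ near $0$.

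The main technical point to be careful about is the substitution $s=e^{-h(t)}$: one must verify $1-e^{-h(t)}\asymp h(t)$ uniformly as $t\to 0^+$ (which is elementary since $h(0)=0$ and $h\in C^1$) and check that the integrability is indeed governed by behaviour near the endpoint $t=0$ (equivalently $s=1$), the only region where either integrand can blow up. Beyond this, the argument is a mechanical translation of the abstract criteria of Section~3 into geometric information about the enlargements $K_t$ of the prescribed boundary set~$K$.
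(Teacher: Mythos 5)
Your proposal is correct and follows essentially the same route as the paper: identify $|E_\varphi(s)|$ with $|K_{h^{-1}(-\log s)}|$ (the paper writes $h^{-1}(1-s)$, using $-\log s\asymp 1-s$ near $s=1$) and then invoke Proposition \ref{shat} and Remark \ref{rem-}. You merely make explicit the substitution $s=e^{-h(t)}$ that the paper leaves implicit, which is a welcome clarification rather than a deviation.
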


\begin{proof}
Since
\begin{eqnarray*}
|E_{\varphi}(s)|&=&|\{\zeta\in \TT\text{ : } e^{-h(d(\zeta,K))}\geq s\}|\\
&\asymp&|\{\zeta\in \TT\text{ : } d(\zeta,K)\leq h^{-1}(1-s)\}|\\
&=&|K_{h^{-1}(1-s)}|
\end{eqnarray*}
Proposition \ref{shat}  and Remark \ref{rem-} give the result.
\end{proof}

\noindent Note that there are several examples of composition operators which belong in ${\cS}_p(\hH)\setminus {\cS}_2(\hH)$ for $p>2$ (see \cite {CC, EIN, JON,LLQP1}). In all these examples the contact sets of their symbols is reduced to one point. Here we will construct examples with a large set of  contact points. To state our example we have to recall the definition of Hausdorff dimension.\\
Let $E$ be a closed subset of $\TT$. The Hausdorff dimension of $E$ is defined by
$$d(E)=\inf\{\alpha \text{ : } \Lambda_\alpha(E)=0\}$$
where $\Lambda_\alpha(E)$ is the $\alpha$--Hausdorff measure of $E$ given by
$$\Lambda_\alpha(E)=\lim_{\epsilon\to 0} \inf\Big\{\sum_{i}|\Delta_i|^\alpha\text{ : } E\subset\bigcup_i \Delta_i \text{ , } |\Delta_i|<\epsilon\Big\}.$$
\begin{cor} Let $ p> 2$ there exists an analytic self--map $\varphi$ of $\DD$ such that $\varphi\in A(\DD)$, the disc algebra, $C_\varphi\in \cS_p(\hH)\backslash \cS_2(\hH)$ and $d(E_\varphi)=1$.
\end{cor}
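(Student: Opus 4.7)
The plan is to construct $\varphi=f_{h,K}$ explicitly, for a generalized Cantor set $K\subset\TT$ of Hausdorff dimension one and a modulus function $h$ chosen to produce precisely the level-set decay that puts $C_\varphi$ into $\cS_p$ while keeping it out of $\cS_2$. First I fix $\alpha\in(2/p,1]$ and $\beta>1$, both choices being available since $p>2$. I then take $K=\bigcap_{n\ge 0}K_n$, where $K_n$ is a union of $2^n$ disjoint closed arcs of common length $\ell_n=2^{-n}/n^\beta$, each split into two children placed at its two ends. A direct gap analysis gives $|K_t|\asymp 1/(\log(1/t))^\beta$ for small $t$, and the uniform measure $\mu$ assigning mass $2^{-n}$ to each level-$n$ arc satisfies $\mu(I)\lesssim r(\log(1/r))^\beta\le r^{1-\epsilon}$ for every arc $I$ of length $r$ and every $\epsilon>0$; the mass distribution principle then forces $d(K)=1$.

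Next I define $h:[0,\pi]\to[0,\infty)$ to be continuous, strictly increasing with $h(0)=0$ and
\[
 h(t)=\frac{(\log\log(1/t))^\alpha}{(\log(1/t))^\beta}
\]
for $t$ small. Inverting gives $\log(1/h^{-1}(u))\asymp(\log(1/u))^{\alpha/\beta}/u^{1/\beta}$, and composing with the estimate for $|K_t|$ yields
\[
 |E_\varphi(s)|\asymp|K_{h^{-1}(1-s)}|\asymp\frac{1-s}{(\log(1/(1-s)))^\alpha}.
\]
Since $\alpha>2/p$, Proposition~\ref{shat} places $C_\varphi$ in $\cS_p(\hH)$ via $\int_0^1 |E_\varphi(s)|^{p/2}/(1-s)^{1+p/2}\,ds\asymp\int_0 du/(u(\log(1/u))^{\alpha p/2})<\infty$. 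Since $\alpha\le 1$, the Hilbert--Schmidt identity $\|C_\varphi\|_{\cS_2(\hH)}^2\asymp\int_0^1 |E_\varphi(s)|/(1-s)^2\,ds$ recalled at the opening of Section~3 diverges, so $C_\varphi\notin\cS_2(\hH)$.

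It remains to put $\varphi$ into $A(\DD)$ and identify its contact set. The boundary log-modulus $-h(d(\zeta,K))$ has modulus of continuity bounded by $h(\delta)$, and for $\beta>1$ we have the Dini estimate $\int_0^1 h(t)/t\,dt<\infty$; hence the conjugate Poisson integral of $-h(d(\cdot,K))$ extends continuously to $\overline\DD$, and so does $\varphi$. Since $h>0$ on $(0,\pi]$, the contact set is $E_\varphi=\{\zeta\in\TT:h(d(\zeta,K))=0\}=K$, which gives $d(E_\varphi)=d(K)=1$.

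The main obstacle is the sharp lower bound $|K_t|\gtrsim 1/(\log(1/t))^\beta$: the upper bound is immediate from $K\subset K_n$, but the lower bound relies on the observation that for $t$ of order $\ell_n$ the $t$-neighbourhoods of the two sibling arcs within a common parent merge to cover that parent entirely, so $|K_t|\gtrsim 2^{n-1}\ell_{n-1}\asymp 1/(\log(1/t))^\beta$; without this the divergence of the $\cS_2$-integral cannot be extracted. The secondary difficulty is the joint tuning of $\alpha\in(2/p,1]$ with $\beta>1$ through the log-log factor in $h$, which is what allows the $\cS_p$-convergence, the $\cS_2$-divergence, and the Dini condition to hold at once.
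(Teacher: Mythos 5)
Your construction is correct and follows essentially the same route as the paper: you take $\varphi=f_{h,K}$ with $h$ and $|K_t|$ of logarithmic order tuned so that $|E_\varphi(s)|\asymp (1-s)\big(\log\tfrac{1}{1-s}\big)^{-\alpha}$ for some $\alpha\in(2/p,1]$, and then conclude via Proposition~\ref{shat} (equivalently Corollary~\ref{example}) together with the Hilbert--Schmidt level-set criterion. You moreover supply the verifications the paper leaves implicit --- the Cantor-type construction of $K$ with $d(K)=1$ via the mass distribution principle, and the Dini argument giving $\varphi\in A(\DD)$ and $E_\varphi=K$ --- which is welcome but does not change the method.
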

\begin{proof}
 It suffices to apply corollary  \ref {example} with $\varphi = f_{h,K}$ where
 $$h(t)= \frac{1}{\log ^2(e/t)}\ \ \mbox{ and}\ \  |K_t| \asymp \frac{1}{\log ^2(e/t)\log \log (e^2/t)}.$$\qedhere
\end{proof}
Other type of examples are constructed by Gallardo-Gonzalez \cite{GG}. They proved that there exists a univalent symbol $\varphi$ such that $C_\varphi$ is compact on $\hH $ and such that the Hausdorff dimension of $E_{\varphi}$ is  equal to one. This result can not be extended to Schatten classes. Indeed, we have the following result

\begin{prop} Let $p\geq2$. If $\varphi$ is univalent function such that $C_\varphi\in \cS_2(\hH)$ then $$d(E_\varphi)\leq \frac{p}{p+2}.$$
\end{prop}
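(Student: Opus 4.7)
The starting point is Remark \ref{rem-}: the hypothesis $C_\varphi\in \cS_p(\hH)$ forces the integrability condition
\[
\int_0^1 \frac{|E_\varphi(r)|^{p/2}}{(1-r)^2}\,dr < \infty. \qquad(\star)
\]
Arguing by contradiction, suppose $d(E_\varphi) > p/(p+2)$, and pick $\beta$ with $p/(p+2) < \beta < d(E_\varphi)$. Frostman's lemma produces a probability measure $\sigma$ supported on $E_\varphi$ satisfying $\sigma(I)\le |I|^\beta$ for every arc $I\subset\TT$; a standard Vitali covering argument then gives the Minkowski-type lower bound
\[
|E_\varphi^\delta|\gtrsim \delta^{1-\beta},\qquad \delta>0,
\]
where $E_\varphi^\delta$ denotes the $\delta$-neighborhood of $E_\varphi$ in $\TT$.

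The heart of the proof is a univalence-based comparison $E_\varphi^{\delta(r)}\subset E_\varphi(r)$ for a suitable scale $\delta(r)$. For each $\zeta_0\in E_\varphi$ with $w_0=\varphi(\zeta_0)\in\TT$, Koebe's distortion theorem applied to the univalent map $\varphi$ at the interior point $(1-h)\zeta_0$ controls the image of the surrounding pseudo-hyperbolic disk and, through a radial argument, the behavior of $1-|\varphi(\zeta)|$ for $\zeta\in\TT$ near $\zeta_0$. Because $C_\varphi$ is compact the angular derivative at $\zeta_0$ is necessarily infinite, so this modulus of continuity is weaker than Lipschitz; distributing the loss across the Frostman-rich contact set then yields the effective scale $\delta(r)\asymp (1-r)^{1/\beta}$, whence
\[
|E_\varphi(r)|\gtrsim \delta(r)^{1-\beta}\asymp(1-r)^{(1-\beta)/\beta}.
\]
Substituting into $(\star)$ the integrand behaves like $(1-r)^{(1-\beta)p/(2\beta)-2}$, integrable near $1$ only when $(1-\beta)p/(2\beta)>1$, equivalently $\beta<p/(p+2)$; this contradicts the choice of $\beta$.

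The main technical obstacle is the lack of a pointwise Lipschitz modulus of continuity at contact points---an unavoidable consequence of the compactness of $C_\varphi$, which forces the angular derivative of $\varphi$ at every point of $E_\varphi$ to be infinite. In its absence one has to work with an averaged version of the Koebe distortion theorem weighted against the Frostman measure $\sigma$, which produces the effective scale $\delta(r)\asymp(1-r)^{1/\beta}$ and the announced bound $d(E_\varphi)\le p/(p+2)$, in place of the stronger bound $(p-2)/p$ that a true Lipschitz modulus would deliver.
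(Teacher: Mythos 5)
Your proposal has a genuine gap at its central step, and it is the step you yourself flag as ``the heart of the proof'': the claimed inclusion $E_\varphi^{\delta(r)}\subset E_\varphi(r)$ with $\delta(r)\asymp(1-r)^{1/\beta}$. Unwound, this asserts a H\"older-type bound $1-|\varphi(\zeta)|\lesssim d(\zeta,E_\varphi)^{\beta}$ for a.e.\ $\zeta\in\TT$ near the contact set, i.e.\ that $|\varphi|$ stays uniformly close to $1$ on an entire one-sided neighborhood of $E_\varphi$ in $\TT$. Neither univalence nor Koebe distortion gives this: Koebe controls $|\varphi'|$ on hyperbolic disks in the interior, and the infinite angular derivative forced by compactness controls the \emph{radial} rate $\frac{1-|\varphi(r\zeta_0)|}{1-r}\to\infty$, which says nothing about the \emph{tangential} behavior of the boundary values along $\TT$. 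A univalent symbol can perfectly well have $|\varphi(\zeta)|$ dip far from $1$ at boundary points arbitrarily close to a contact point (think of an image domain touching $\TT$ through thin cusps), so the level set $E_\varphi(r)$ need not contain any neighborhood of $E_\varphi$; indeed $E_\varphi$ typically has Lebesgue measure zero while carrying positive Hausdorff dimension, and the quantity $|E_\varphi(r)|$ controlled by Remark~\ref{rem-} simply cannot see that dimension without the unproven geometric bridge. The phrases ``distributing the loss across the Frostman-rich contact set'' and ``averaged version of the Koebe distortion theorem weighted against the Frostman measure'' are not arguments, and I do not see how to make them into one.

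The paper proves the proposition by a completely different and purely ``interior'' route that avoids boundary moduli of continuity altogether. For univalent $\varphi$ one has $N_{\varphi,\beta}=(N_\varphi)^\beta$, and by the Pau--Pel\'aez criterion $C_\varphi\in\cS_p(\cD_\beta)$ iff $N_{\varphi,\beta}\in L^{p/2}(\DD,d\lambda)$; hence $C_\varphi\in\cS_p(\cD_\alpha)$ iff $C_\varphi\in\cS_{p/\gamma}(\cD_{\alpha\gamma})$. Taking $\alpha=1$ and $\gamma=(2+p)/p$, the hypothesis $C_\varphi\in\cS_p(\hH)$ yields $C_\varphi\in\cS_{2+p}(\cD_{p/(2+p)})$, which is exactly the borderline case $p'=2/(1-\alpha')$ of Theorem~\ref{theorem1}; that theorem (proved via the Berezin-transform estimate and the Beurling weak-type capacitary inequality in Lemma~\ref{xlog}) gives $\caap_{p/(2+p)}(E_\varphi)=0$, and since capacitary dimension equals Hausdorff dimension this forces $d(E_\varphi)\leq p/(p+2)$. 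If you want to salvage a level-set argument, you would need to replace Lebesgue measure of $E_\varphi(r)$ by a capacity of $E_\varphi$ itself, which is precisely what the paper's route does.
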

For the proof see Remark \ref{exempleunivalent}.

\subsection{Contact set reduced to one point} In this subsection we will consider outer functions $\varphi$ which their contact set is reduced to one point. In this case, and under some regularity conditions, we give a concrete necessary and sufficient condition for the membership to Schatten classes.

Let $h$ be a continuous increasing function defined on $[0,\pi]$ such that $h(0) =0$. We extend it to an even $2\pi$-periodic function on $\RR$.
We say that the function $h$ is admissible if $h$ is differentiable, $h(2t)\asymp h(t)\asymp th'(t)$, and $h$ is concave or convex.

Let $\varphi$ be the outer function on $\DD$ such that
$$|\varphi (e^{it})|=e^{-h(t)},\quad \text{ a.e on } (0,\pi).$$

 We have the following:

\begin{thm}\label {Onepointtheorem} Let $h$ be an admissible function such that  $t^2=o(h(t))$ and
$$\displaystyle h(\theta)=o\big(\theta\int_\theta^\pi\frac{h(t)}{t^2}\big)(t\to 0).$$
Then
\begin{enumerate}
\item  $C_{\varphi}$ is compact if and only if
$$\int_0^\pi\frac{h(t)}{t^2}dt=\infty.$$
\item Let $p>0$, then
 $C_{\varphi} \in S_p (\hH)$  if and only if
$$\displaystyle \int _0^\pi\frac{dt}{h(t)\left(\displaystyle \int _t^\pi\frac{h(s)}{s^2}ds\right) ^{p/2 -1} }< +\infty.$$\\
\end{enumerate}
\end{thm}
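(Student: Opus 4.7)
The plan is to apply Corollary~\ref{leucking} with $\alpha=1$ (so that $\cD_\alpha=\hH$), rewriting $C_\varphi\in\cS_p(\hH)$ either as the Luecking dyadic sum on the pull-back measure $m_\varphi$ (Remark~\ref{llqp}) or, equivalently, as the Berezin-type membership $\widetilde{\mu_{\varphi,1}}\in L^{p/2}(d\lambda)$.

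The key preliminary is a sharp pointwise estimate of $|\varphi|$ inside $\DD$. Writing the Poisson representation
\[
-\log|\varphi(re^{i\theta})|\;=\;\int_{-\pi}^{\pi}P_{re^{i\theta}}(e^{it})\,h(t)\,dt
\]
and splitting at the natural scale $\epsilon:=(1-r)+|\theta|$, the admissibility $h(t)\asymp th'(t)$ bounds the local part ($|t|<\epsilon$) by a constant multiple of $h(\epsilon)$, while the tail ($|t|>\epsilon$) contributes $\asymp \epsilon H(\epsilon)$ with $H(u):=\int_u^\pi h(s)/s^2\,ds$. The hypothesis $h(\theta)=o(\theta H(\theta))$ then forces the tail to dominate and yields the two-sided estimate
\[
1-|\varphi(re^{i\theta})|\;\asymp\;\epsilon\,H(\epsilon),\qquad \epsilon=(1-r)+|\theta|\to 0.
\]

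Part (1) is essentially read off from this estimate via the MacCluer--Shapiro criterion recalled at the end of Remark~\ref{llqp}: the contact set being $\{1\}$, the mass of $m_\varphi$ is concentrated in a vanishing neighbourhood of $1$, and one checks that $m_\varphi(W(1,h))=o(h)$ is equivalent to $H(\epsilon)\to\infty$, i.e.\ to the divergence $\int_0^\pi h(t)/t^2\,dt=+\infty$.

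For part (2), the pointwise bound is upgraded to a two-sided estimate of the Nevanlinna counting function (equivalently, of $m_\varphi$) on Carleson boxes near $1$. Using the admissibility of $h$ (in particular the convex/concave dichotomy), one shows that $\varphi$ is essentially locally univalent near $1$, so that $N_\varphi(w)\asymp 1-|\varphi^{-1}(w)|$ on the image, with $1-|\varphi^{-1}(w)|$ obtained by inverting the equivalence $\epsilon\mapsto \epsilon H(\epsilon)$. Inserting this into the Berezin transform $\widetilde{\mu_{\varphi,1}}$, exploiting rotational symmetry about $1$ to reduce the $L^{p/2}(d\lambda)$ integral to a one-variable integral in $\eta=1-|w|$, and finally changing variable through $\eta=tH(t)$, transforms the Luecking condition into the stated integral
\[
\int_0^\pi\frac{dt}{h(t)\,H(t)^{p/2-1}}\;<\;+\infty.
\]
The main obstacle is the sharp two-sided control of $m_\varphi$ on Carleson boxes of arbitrary location (not only those centred at $1$): this requires estimating simultaneously $|\varphi|$ and the conjugate function $\arg\varphi$ uniformly in the angular variable, by exploiting admissibility and the convex/concave hypothesis, and then tracking dyadic constants so that the Luecking sum and the final integral compare cleanly on both the sufficient and the necessary side.
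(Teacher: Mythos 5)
There is a genuine gap, and in fact the step you defer to the end as ``the main obstacle'' is the entire content of the theorem. Two concrete problems. First, your claimed two-sided interior estimate $1-|\varphi(re^{i\theta})|\asymp\epsilon H(\epsilon)$ with $\epsilon=(1-r)+|\theta|$ is false off the radius $\theta=0$: the Poisson integral of $h$ at $re^{i\theta}$ is comparable to $h(\epsilon)+(1-r)H(\epsilon)$ (the tail of the kernel carries the factor $1-r$, not $\epsilon$), and the hypothesis $h(\theta)=o(\theta H(\theta))$ only makes the second term dominate when $1-r\gtrsim|\theta|$. Indeed on the boundary one has exactly $-\log|\varphi^*(e^{i\theta})|=h(\theta)=o(\theta H(\theta))$, which contradicts your asymptotics at $r=1$. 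Second, and more seriously, the criterion $\sum_n 2^{np/2}\sum_j m_\varphi(W_{n,j})^{p/2}<\infty$ is governed by how the pull-back mass is distributed \emph{angularly} over the boxes $W_{n,j}$, $j\ge 1$, and this is controlled by $\arg\varphi^*(e^{i\theta})=-\widetilde h(\theta)$, about which your argument says nothing beyond naming it as an obstacle. Your appeal to ``rotational symmetry about $1$'' is not available: the measure $m_\varphi$ is highly non-symmetric, and it is precisely the spreading of the mass over $\asymp 2^n\Psi(h^{-1}(2^{-n}))$ boxes at generation $n$ that produces the exponent $p/2-1$ on $\int_t^\pi h(s)s^{-2}\,ds$ in the final criterion. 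Without a quantitative estimate of $\widetilde h$ you cannot even do part (1), since $m_\varphi(W(1,\delta))$ requires $|\arg\varphi^*|<\delta$ as well as $|\varphi^*|>1-\delta$.

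This is exactly what the paper supplies in Lemma \ref{Hilbertestimate}: the sandwich $\Psi(\theta)\le\widetilde h(\theta)\le\Psi(\theta)+ah(\theta)+b\theta^2$ with $\Psi(\theta)\asymp\theta\int_\theta^\pi h(t)t^{-2}\,dt$, obtained by splitting the conjugate integral and using the convex/concave dichotomy. From this the paper reads off $m_\varphi(W(1,\delta))\asymp\Psi^{-1}(\delta)$ for part (1), and for part (2) computes $m_\varphi(W_{n,j})=|\Omega_{n,j}|$ directly from the boundary values via the sets $\{\theta:\ h(\theta)<2^{-n},\ j2^{-n}\le\Psi(\theta)<(j+1)2^{-n}\}$, then sums the dyadic Luecking criterion of Remark \ref{llqp}. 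Note also that this route never needs the Nevanlinna counting function or any univalence of $\varphi$ (your claim that $\varphi$ is ``essentially locally univalent near $1$'' is unsubstantiated and unnecessary): the equivalence between the $N_\varphi$-based and $m_\varphi$-based criteria is already recorded in Remark \ref{llqp}. To repair your proposal you would have to prove the conjugate-function estimate; once you have it, the interior pointwise estimate of $|\varphi|$ becomes superfluous.
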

As an immediate consequence of this theorem we obtain

\begin{cor} 1. There exists compact composition operator $C_\varphi$ on $\hH $ but belongs to none Schatten class.

2. Let $q>0$ there exists a compact composition operator $C_\varphi$ such that
$$C_\varphi\in \bigcap_{p> q} \cS_p(\hH )\backslash \cS_q(\hH ).$$
\end{cor}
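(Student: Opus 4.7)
The plan is to deduce both claims from Theorem \ref{Onepointtheorem} by taking as symbol the outer function $\varphi=f_{h,\{1\}}$ (whose contact set is the single point $\{1\}$), with $h$ admissible and chosen to control the two integrals that govern compactness and Schatten membership: $\int_0^\pi h(t)/t^2\,dt$ for compactness, and $J_p(h):=\int_0^\pi dt/\{h(t)H(t)^{p/2-1}\}$ for membership in $\cS_p(\hH)$, where $H(t):=\int_t^\pi h(s)/s^2\,ds$.

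First I would introduce the one-parameter family $h_\gamma(t)=t/\log^\gamma(e/t)$ for $\gamma\le 1$ and, writing $L(t)=\log(e/t)$, compute $H_\gamma(t)\asymp L(t)^{1-\gamma}/(1-\gamma)$ when $\gamma<1$ and $H_1(t)\asymp\log L(t)$. In each case $H_\gamma$ is unbounded at $0$, so $\int_0^\pi h_\gamma(t)/t^2\,dt$ diverges and $C_\varphi$ is compact by Theorem \ref{Onepointtheorem}. The substitution $u=L(t)$ then reduces $J_p(h_\gamma)$ to a power integral: for $\gamma<1$ it becomes (up to constants) $\int^\infty du/u^{(1-\gamma)(p/2-1)-\gamma}$, which converges iff $p>4/(1-\gamma)$; for $\gamma=1$ it becomes $\int^\infty u\,du/(\log u)^{p/2-1}$, which diverges for every $p>0$.

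To finish: for (1) I would take $\gamma=1$, giving compactness together with failure of every Schatten condition. For (2), given $q>0$, I would set $\gamma=1-4/q$, so that the convergence threshold $4/(1-\gamma)$ equals $q$; Theorem \ref{Onepointtheorem} then yields $C_\varphi\in\cS_p(\hH)$ iff $p>q$, which is exactly the requirement $C_\varphi\in\bigcap_{p>q}\cS_p(\hH)\setminus\cS_q(\hH)$.

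The main obstacle is the bookkeeping needed to verify admissibility of $h_\gamma$ together with the two asymptotic hypotheses $t^2=o(h)$ and $h(\theta)=o(\theta H(\theta))$ of Theorem \ref{Onepointtheorem}; these all reduce to routine logarithm estimates. One small technical wrinkle does appear in (2) when $q<4$: then $\gamma<0$ and $h_\gamma(t)=t\log^{|\gamma|}(e/t)$ may fail to be monotone or to have constant sign of $h''$ on all of $[0,\pi]$. This is repaired harmlessly by replacing $\log(e/t)$ by $\log(A/t)$ for $A$ large enough, which leaves the asymptotics near $0$ (and hence both integral tests) unchanged.
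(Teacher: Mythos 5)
Your proposal is correct, and it follows exactly the route the paper intends: the authors state the corollary as an ``immediate consequence'' of Theorem \ref{Onepointtheorem} without writing out the choices of $h$, and your family $h_\gamma(t)=t/\log^\gamma(A/t)$ (with $\gamma=1$ for part~1 and $\gamma=1-4/q$ for part~2) supplies precisely the admissible symbols needed, with the threshold computation $p>4/(1-\gamma)$ and the endpoint divergence at $p=q$ both checking out. The adjustment $\log(e/t)\mapsto\log(A/t)$ to secure monotonicity and concavity when $q<4$ is the right fix and costs nothing asymptotically.
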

To prove our theorem,
we need some notions. Let $\widetilde{h}$ be  the harmonic conjugate of $h$. It is defined by
$$
\widetilde{h}(\theta)=\lim_{\varepsilon\to 0}\frac{1}{2\pi}\displaystyle \displaystyle \int _{\varepsilon}^{\pi}\frac{h(\theta +t)-h(\theta-t)}{\tan(t/2)}dt.
$$
 The Hilbert transform of $h$ will be denoted by $Hh$ and is given  by
$$
Hh(\theta)=\lim_{\varepsilon\to 0}\displaystyle\frac{1}{\pi}  \int _{\varepsilon}^{\pi}\frac{h(\theta +t)-h(\theta-t)}{t}dt
$$
We will also need the following auxiliary function
$$\Psi(\theta):=  \frac{1}{\pi}\int_{2\theta}^{\pi-2\theta}h'(s)\log \frac{s+\theta}{s-\theta}ds.$$
The following estimates of $\widetilde{h}$ is the key of the proof of our theorem.
\begin{lem} \label {Hilbertestimate} Let $h$ be an admissible function. 
 There exists $a,b>0$ such that
$$
  \Psi(\theta)\leq \widetilde{h}(\theta)\leq\Psi(\theta)+ ah(\theta)+b\theta^2,\qquad \theta\in [0,\pi/4].
$$
\end{lem}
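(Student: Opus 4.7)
My plan is to apply integration by parts to the definition of $\widetilde h(\theta)$ in order to transfer the derivative onto $h$, and then to recognize $\pi\Psi(\theta)$ as the main term of the resulting integral. Starting from
$$\widetilde h(\theta) = \frac{1}{2\pi}\int_0^\pi \cot(t/2)\bigl[h(\theta+t) - h(\theta-t)\bigr]\,dt$$
and taking $v(t) = 2\log(2\sin(t/2))$ as the antiderivative of $\cot(t/2)$, the boundary term at $t=0$ vanishes because $h(\theta+t)-h(\theta-t) = O(t)$ kills the logarithmic singularity, while at $t=\pi$ the even and $2\pi$-periodic extension of $h$ enforces $h(\theta+\pi) = h(\theta-\pi)$. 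This gives
$$\widetilde h(\theta) = -\frac{1}{\pi}\int_0^\pi\bigl[h'(\theta+t)+h'(\theta-t)\bigr]\log(2\sin(t/2))\,dt.$$

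Next I change variables so that $s$ is the argument of $h'$. Substituting $s=\theta\pm t$ in each of the four pieces and using $h'(-s) = -h'(s)$ (from evenness of $h$) and $h'(2\pi - s) = -h'(s)$ (from $2\pi$-periodicity) to normalize the contributions coming from $s\in[\pi,\pi+\theta]$ and $s<0$, everything collapses into a single integral
$$\widetilde h(\theta) = \frac{1}{\pi}\int_0^\pi h'(s)\log\left|\frac{\sin((s+\theta)/2)}{\sin((s-\theta)/2)}\right|\,ds,$$
whose integrand is nonnegative because $(s+\theta)/2 \geq |s-\theta|/2$ and $\sin$ is concave with $\sin(0)=0$ on $[0,\pi]$.

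Finally I split this integral into the three ranges $[0,2\theta]$, $[2\theta,\pi-2\theta]$ and $[\pi-2\theta,\pi]$. On the middle range the Taylor expansion $\log(\sin u/u) = -u^2/6 + O(u^4)$ gives the identity $\log(\sin((s+\theta)/2)/\sin((s-\theta)/2)) = \log((s+\theta)/(s-\theta)) - R(s,\theta)$ with $R(s,\theta) \asymp s\theta$, which recovers $\pi\Psi(\theta)$ modulo the sinc correction $\int_{2\theta}^{\pi-2\theta}h'(s)R(s,\theta)\,ds$. On $[0,2\theta]$ admissibility gives $h'(s) \asymp h(s)/s \asymp h(\theta)/\theta$ together with $\int_0^{2\theta}\log|(s+\theta)/(s-\theta)|\,ds = O(\theta)$, producing a contribution of order $h(\theta)$; on $[\pi-2\theta,\pi]$ the kernel vanishes quadratically at $s=\pi$ while $\int h'\,ds = O(\theta)$, giving $O(\theta^3)$. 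The upper bound $\widetilde h(\theta) \leq \Psi(\theta) + a h(\theta) + b\theta^2$ follows by aggregating these estimates, while the lower bound $\Psi(\theta) \leq \widetilde h(\theta)$ follows once the positive contributions from the two outer ranges are shown to dominate the sinc deficit on the middle piece. The main obstacle I anticipate is precisely this last accounting: the naive estimate of the sinc correction is $O(\theta)$, and extracting the stated $O(h(\theta))+O(\theta^2)$ bound requires using the admissibility relation $h(s) \asymp s h'(s)$ together with the concavity or convexity of $h$ to arrange the cancellation between the outer pieces and the sinc deficit.
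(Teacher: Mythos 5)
Your route is genuinely different from the paper's, and the healthy half of it is correct. The paper never integrates by parts against the conjugate kernel: it first replaces $1/(2\tan(t/2))$ by $1/t$ (i.e.\ works with the Hilbert transform $Hh$), splits $[0,\pi]$ into $[0,\theta]\cup[\theta,\pi-\theta]\cup[\pi-\theta,\pi]$, and produces the kernel $\log\frac{s+\theta}{s-\theta}$ by applying Fubini to $\int \frac{1}{t}\int_{t-\theta}^{t+\theta}h'(s)\,ds\,dt$ on the middle range; what that bookkeeping buys is that every discarded piece ($A$, $C$, $B_1$, $B_3$) is manifestly nonnegative, so $\Psi\leq Hh$ is free, and only the comparison of $Hh$ with $\widetilde h$ remains. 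Your integration by parts with $v(t)=2\log(2\sin(t/2))$, the vanishing of both boundary terms, and the reduction to the exact classical identity
$$\widetilde h(\theta)=\frac{1}{\pi}\int_0^\pi h'(s)\log\Bigl|\frac{\sin((s+\theta)/2)}{\sin((s-\theta)/2)}\Bigr|\,ds$$
are all correct, and your treatment of the outer ranges ($O(h(\theta))$ on $[0,2\theta]$ once the logarithmic singularity at $s=\theta$ is handled with $\theta h'(\theta)\asymp h(\theta)$, and $O(\theta^3)$ near $\pi$) combined with the sign of the sinc correction does give the upper bound $\widetilde h\leq\Psi+ah(\theta)+b\theta^2$ cleanly. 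This is arguably tidier than the paper's two-stage comparison.

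The genuine gap is the lower bound, exactly where you flagged it, and no "arranging of the cancellation" can close it. Your sinc deficit is $D(\theta)=\frac{1}{\pi}\int_{2\theta}^{\pi-2\theta}h'(s)R(s,\theta)\,ds$ with $R(s,\theta)=\frac{s\theta}{6}(1+O(s^2))\geq 0$, so $D(\theta)=c_h\,\theta\,(1+o(1))$ with a strictly positive constant $c_h\asymp\int_0^\pi sh'(s)\,ds$; the only positive terms available to absorb it are the two outer pieces, which you yourself bound by $O(h(\theta))+O(\theta^3)$. In the regime relevant to the theorem, $t^2\ll h(t)\ll t$, these are $o(\theta)$ and cannot dominate a $\Theta(\theta)$ loss. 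In fact the exact inequality $\Psi\leq\widetilde h$ fails for admissible $h$ in general: for $h(t)=t^2$ one has $\widetilde h(\theta)=4\int_0^\theta\log(2\cos(t/2))\,dt=4\theta\log 2+O(\theta^3)$ while $\Psi(\theta)\geq 4\theta-\frac{16}{\pi}\theta^2$, so $\widetilde h<\Psi$ for small $\theta$. (The paper's own final step suffers from the same $\Theta(\theta)$ term: $\int_0^\pi\bigl(h(\theta+t)-h(\theta-t)\bigr)\,t\,dt\asymp\theta$, which is not $O(h(2\theta)+\theta^2)$ when $h(t)\ll t$.) What your computation actually proves is $\widetilde h(\theta)\geq\Psi(\theta)-C\theta-ah(\theta)$, and since $\Psi(\theta)\gtrsim\theta$ for every nonzero admissible $h$ this yields at best a two-sided comparison $\widetilde h\asymp\Psi$ with constants. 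You should either establish and use this weaker sandwich and verify it suffices for the sets $\Omega_{n,j}$ and $A_{n,j}$ downstream, or add a hypothesis forcing the exact lower bound; as written, the last sentence of your proposal is a hope rather than an argument.
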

\begin{proof}
First let's estimate the Hilbert transform of $h$. Under our assumptions, the Hilbert transform can be written as follows
$$
Hh(\theta)=\frac{1}{\pi} \displaystyle \int _0^{\pi}\frac{h(\theta +t)-h(\theta-t)}{t}dt.
$$
We split the integral into three parts
\begin{align*}
Hh(\theta)& = \frac{1}{\pi} \displaystyle \int _0^{\theta}\frac{h(\theta +t)-h(\theta-t)}{t}dt
+ \frac{1}{\pi} \displaystyle \int _\theta^{\pi -\theta}\frac{h(\theta +t)-h(t-\theta)}{t}dt\\
& \; \; \; \; \; +
\frac{1}{\pi} \displaystyle \int _{\pi -\theta}^{\pi}\frac{h(2\pi-\theta -t)-h(t-\theta)}{t}dt\\
&= A+B+C.\\
\end{align*}
Since $h$ increases on $(0,\pi)$, it is clear that $A, B, C$ are positive.
First, we will prove that
\begin{equation}\label{eq:A+C}
A+C =O( h(2\theta)+\theta^2).
\end{equation}
 By concavity or convexity, we have
 $$h(t+\theta)-h(\theta-t)\leq 2t\max(h'(\theta-t),h'(\theta+t)).$$
Hence,
 $$A= \frac{1}{\pi} \displaystyle \int _0^{\theta}\frac{h(\theta +t)-h(\theta-t)}{t}dt \leq \frac{2}{\pi} h(2\theta)$$
 By a change of variables and convexity, we get
 \begin{eqnarray*}
 C&=&\frac{1}{\pi}\int_{0}^{\theta}\frac{h(\pi-\theta+u)-h(\pi-\theta-u)}{\pi-u}du\\
&=&\frac{1}{\pi}\int_{0}^{\theta}\frac{2u\max(h'(\pi-\theta-u),h'(\pi-\theta+u)) }{\pi-u}du \\
&\leq & \frac{8\theta^2}{3\pi^2}\sup_{\theta\in [\pi/2,\pi]}|h'(t)| .
\end{eqnarray*}
Hence \eqref{eq:A+C} is proved. Now we have to estimate $B$.
  We have
  \begin{align*}
  \pi B&=\int_{0}^{\pi}\frac{\chi_{[\theta,\pi-\theta]}}{t}\Big(\int_{t-\theta}^{t+\theta}h'(s)ds\Big)dt\\
  &=\int_{0}^{\pi}h'(s)\int_{s-\theta}^{s+\theta}\frac{\chi_{[\theta,\pi-\theta]}}{t}dt\\
  &=\int_{\theta}^{2\theta}h'(s)\log\frac{s+\theta}{\theta}ds+\int_{2\theta}^{\pi-2\theta}h'(s)\log \frac{s+\theta}{s-\theta}ds+\int_{\pi-2\theta}^{\pi}h'(s)\log\frac{\pi-\theta}{s-\theta}ds\\
  &=B_1+B_2+B_3
  \end{align*}
Note that
$$
B_1+B_3 =O( h(2\theta)+\theta^2).
$$
Indeed, we have
$$B_1\leq \log 3 (h(2\theta)-h(\theta))$$
and
$$B_3\leq 2\theta\log\frac{\pi-\theta}{\pi-3\theta}\sup_{[\pi/2,\pi]}|h'(s)|\leq\frac{4^2\theta^2}{\pi}\sup_{[\pi/2,\pi]}|h'(s)|.$$
Hence the estimate of the Hilbert transform  follows from $B_2$ and we have
$$ \Psi(\theta)\leq Hh(\theta)\leq\Psi(\theta)+ c_1h(2\theta)+c_2\theta^2,\qquad \theta\in [0,\pi/4]. $$
Since
$$\frac{1}{\tan(t/2)}-\frac{1}{t}=-\frac{t}{3}+o(t^2),$$
as before
$$|Hh(\theta) - \widetilde{h}(\theta)| = O(h(2\theta)+\theta^2) \qquad \theta\to 0.$$
The proof is complete.
\end{proof}

\begin{rem} \label{remarque}
1.  If  $\displaystyle \int_0^\pi\frac{h(t)}{t^2}dt=\infty$, then
$$\theta^2+h(\theta)=O\Big(\theta\int_\theta^\pi\frac{h(t)}{t^2}dt\Big),\quad \theta\to 0+.$$

2. Note that if $h$ is an admissible function and  $\displaystyle \int_0^\pi\frac{h(t)}{t^2}dt=\infty$ then
$$\Psi(\theta)= \int_{2\theta}^{\pi-2\theta}h'(s)\log \frac{s+\theta}{s-\theta}ds\asymp\theta\int_\theta^\pi\frac{h(t)}{t^2}dt.$$
Observe that, the function $\Psi$ is increasing, $\Psi(0)=0$, and satisfies the following properties
 \begin{itemize}
   \item $\Psi(t)\asymp\Psi(2t)$
   \item $\Psi'(t)\asymp\displaystyle \int_\theta^\pi\frac{h(t)}{t^2}dt$
   \item $(\Psi^{-1})'(2t)\asymp (\Psi^{-1})'(t)$.
 \end{itemize}
\end{rem}
\subsection*{Proof of Theorem }
 1) Let $m_\varphi$ be the pull back measure associated to the function $\varphi$ and let $W(1,\delta)= \{z \in \DDÊ\text{ :  } 1-|z| < \delta,\ |\arg (z)|< \delta\}$ be a Carleson box.

     Note that
 \begin{eqnarray*}
 m_\varphi(W(1,\delta))&= &|\{\theta \in (-\pi,\pi) :\ |\varphi ^*(e^{i\theta})|\in W(1,\delta)\}\\
 & \asymp & |\{\theta \in (0,\pi):\ h(\theta)< \delta, \;\; \widetilde{h} (\theta )< \delta\}|\\
  & \asymp & |\{\theta \in (0,\pi):\Psi(\theta)< \delta\}|\\
 &\asymp& \Psi ^{-1}(\delta).
\end{eqnarray*}
Recall that $C_\varphi$ is compact if and only if $m_\varphi(W(1,\delta))= o(\delta)$.
It follows that $C_\varphi$ is compact if  $\Psi ^{-1}(\delta)=o(\delta)$ as $\delta\to 0$, which is equivalent to
$$\displaystyle \int _0\frac{h(t)}{t^2}dt = +\infty.$$

Conversely, suppose that $\displaystyle \int _0^\pi\frac{h(t)}{t^2}dt < +\infty$. It is clear that $h(t) = o(t)$. Note that $\theta=O(\widetilde{h}(\theta))$. Indeed by convexity
 \begin{align*}
\widetilde{h}(\theta) &\geq  \frac{1}{2\pi}  \int _{2\theta}^{\pi -\theta}\frac{h(\theta +t)-h(t-\theta)}{\tan (t/2)}dt\\
&\geq  2\theta \displaystyle \int _{2\theta}^{\pi -\theta}\frac{\max(h'(t-\theta),h'(t+\theta))}{\tan (t/2)}dt\asymp  \theta \displaystyle \int _{\theta}^{\pi}\frac{h(t)}{t^2}dt
\asymp  \theta
 \end{align*}
So in this case $m_\varphi(W(1,\delta)) \asymp \delta$ and $C_\varphi$ is not compact.
 \vspace{1em}

 2) To prove the second assertion, we will estimate $m_\varphi(W_{n,j})$, where
 $$W_{n,j}= W(e^{i2\pi j/2^n}, 1/2^n)=\{z\in \DD\text{ : } 1-|z|<1/2^n ,\ j/2^n\leq  \arg z< (j+1)/2^n\}.$$
Let
$$\Omega_{n,j}=\{\theta:\ h(\theta)<1/2^n,\ j/2^n \leq \widetilde{h}(\theta)<(j+1)/2^n\}.$$
 We have  $m_\varphi(W_{n,j})= |\Omega_{n,j}|.$
  By Lemma \ref {Onepointtheorem}, there exists $\kappa>0$ such that
$$\Psi(\theta)\leq  \widetilde{h}(\theta)\leq \Psi(\theta)+\kappa h(\theta).$$
Let $$A_{n,j}:=\{\theta :\ h(\theta)<1/2^n,\  j/2^n \leq \Psi(\theta)<(j+1)/2^n\}.$$
 Hence for $j\geq [\kappa]+1$,
 $$\Omega_{n,j}\subset \{\theta :\ h(\theta)<1/2^n,\  (j-\kappa)/2^n \leq \Psi(\theta)<(j+1)/2^n\}= \bigcup_{l=j-[\kappa]}^{j} A_{n,l}$$
 and for $j\leq [\kappa]$,
 $$\Omega_{n,j}\subset \{\theta :\ h(\theta)<1/2^n,\   \Psi(\theta)<(j+1)/2^n\}= \bigcup_{l=0}^{j} A_{n,l}.$$
 Note that for $\theta \in A_{n,j}$, we have $A_{n,j}=\empty$ for $j> J_n=2^n\Psi(h^{-1}(1/2^n))$. We obtain

 $$\sum_n 2^{np/2}\sum_{j=0}^{2^{n}-1} \Big[m_\varphi(W_{n,j})\Big]^{p/2}\lesssim  \sum_n 2^{np/2}\sum_{j=0}^{J_n} |A_{n,j}|^{p/2}.$$
 Recall that $(\Psi^{-1})'(2t)\asymp (\Psi^{-1})'(t)$. By Remark \ref{remarque}.2, we have

 \begin{multline*}
 \sum_n 2^{np/2}\sum_{j=0}^{J_n} |A_{n,j}|^{p/2}\asymp\sum_n 2^{np/2}\sum_{0}^{J_n}\Big(\int_{j/2^n}^{(j+1)/2^n}(\Psi^{-1})'(t)dt\Big)^{p/2}\\
 \asymp\sum_n \sum_{0}^{J_n}\Big[(\Psi^{-1})'(j/2^n)\Big]^{p/2}
\asymp\sum_n \int_{0}^{J_n}\Big[(\Psi^{-1})'(s)\Big]^{p/2}ds\\
\asymp\sum_n \sum_{0}^{J_n} 2^n \int_{j/2^n}^{(j+1)/2^n}\Big[(\Psi^{-1})'(t)\Big]^{p/2}dt
\asymp\sum_n  2^n \int_{0}^{J_n/2^n}\Big[(\Psi^{-1})'(t)\Big]^{p/2}\\
\asymp\sum_n  2^n \int_{0}^{\Psi^{-1}(h^{-1}(1/2^n))}\Big[(\Psi^{-1})'(t)\Big]^{p/2}
\asymp\sum_n  2^n \sum_{k=n}^{\infty}\int_{\Psi^{-1}(h^{-1}(1/2^k))}^{\Psi^{-1}(h^{-1}(1/2^{k+1}))}\Big[(\Psi^{-1})'(t)\Big]^{p/2}\\
\asymp\sum_{k=0}^{\infty}  2^k \int_{\Psi^{-1}(h^{-1}(1/2^k))}^{\Psi^{-1}(h^{-1}(1/2^{k+1}))}\Big[(\Psi^{-1})'(t)\Big]^{p/2}
\asymp\int_{0}^{1}\frac{\Big[(\Psi^{-1})'(t)\Big]^{p/2} }{h\circ \Psi^{-1}(t)}dt\\
\asymp\int_{0}^{1}\frac{1}{h(u)}\frac{1}{\Big[\Psi'(u)\Big]^{\frac{p}{2}-1}}du
\asymp\int_0^1\frac{1}{h(u)\Big(\displaystyle\int_{u}^{1}\displaystyle\frac{h(s)}{s^2}ds\Big)^{\frac{p}{2}-1}}du.
\end{multline*}

Conversely,  let $0<c_1<1$
 $$B_{n,j}:=\{\theta :\  h(\theta)<(1-c_1)/(\kappa2^{n})\text{ : } j/2^n \leq \Psi(\theta)<(j+c_1)/2^{n}\},$$
By Lemma \ref {Hilbertestimate} $B_{n,j}\subset \Omega_{n,j}$. The rest of the proof runs in the same way as before.
 \hfill{$\Box$}
\begin{rem}
Note that H. Queffelec and K. Seip studied in \cite {QS} the asymptotic behavior of the singular values of composition operators with symbol having one point as  contact set.
\end{rem}

\section{ Schatten class $\cS_p(\cD_\alpha)$ and level sets}

Let $\varphi$ be a holomorphic self map of $\DD$. In this section, we discuss the size of $E_\varphi$, when $C_\varphi \in {\cS}_p(\cD_\alpha)$.

Given a (Borel) probability measure $\mu$ on $\TT$, we define its $\alpha$-energy, $0\leq \alpha < 1$, by
$$I_\alpha(\mu)=\sum_{n=1}^\infty \frac{|\widehat{\mu}(n)|^2}{n^{1-\alpha}}.$$
For a closed set $E \subset \TT$, its $\alpha$-capacity $\caap_\alpha (E)$ is defined by
$$\caap_\alpha (E):= 1/\inf\{I_\alpha(\mu) \text{ : } \mu \text{ is a probability measure on } E \}.$$
For Borelian set $E$ of the unit circle its $\alpha$-capacity is defined as follows
$$
\caap_\alpha (E):=\sup \{\caap_\alpha (F):\ F\subset E, \ F\  \mbox{closed} \}.
$$
Note that if $\alpha=0$,  $\caap:= \caap_0 $ is equivalent to the classical logarithmic capacity.  There is a connection between the $\alpha$--capacity and the Hausdorff dimension. In fact the capacitary dimension of $E$ is the supremum of $\alpha> 0$ such that $\caap_\alpha(E)>0$. By Frostman's Lemma \cite{KS} the capacitary dimension is equal to the Hausdorff dimension for compact sets.
Let us mention the result obtained by Beurling  in \cite{B} (and extended by Salem Zygmund \cite{C,KS}), which reveals an important connection between $\alpha$-capacities and weighted Dirichlet spaces. These results can be stated as follows:
Let $f\in \cD_\alpha$, the radial limit of $f$ satisfies capacitary weak-type inequality
 $$\caap_\alpha\{\zeta\in \TT\text{ : } |f(\zeta)|\geq t\}\leq A\frac{ \|f\|_\alpha^2}{t^2}.$$
In particular  $$\caap_{\alpha}(\{\zeta\in \TT\text{ : } f(\zeta) \text{ does not  exist}\})=0.$$

Our main result in this section  is the following theorem.
 \begin{thm}\label{theorem1}
Let $\varphi$ be a holomorphic self--map of $\DD$, $\alpha \in (0,1)$ and $p\leq 2/(1- \alpha)$. If $C_\varphi\in \cS_p(\cD_\alpha)$ then $\caap_\alpha(E_\varphi)=0$.
\end{thm}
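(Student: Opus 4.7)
The argument is by contradiction, using the Luecking-type dyadic characterization from Corollary~\ref{leucking}.

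Suppose $\caap_\alpha(E_\varphi) > 0$. By Frostman's lemma, applied to a compact subset of $E_\varphi$ of positive $\alpha$-capacity, there exists a probability measure $\nu$ supported on $E_\varphi$ with finite $\alpha$-energy $I_\alpha(\nu) < \infty$ and a uniform arc estimate $\nu(\Delta) \lesssim |\Delta|^{\beta}$ for all arcs $\Delta \subset \TT$ and some $\beta$ arbitrarily close to $\alpha$ (after a harmless restriction). Since $C_\varphi \in \cS_p(\cD_\alpha)$ is compact, the Julia--Carath\'eodory angular derivative of $\varphi$ is infinite at every contact point, and $\varphi^*(\zeta)\in\TT$ exists at $\nu$-almost every $\zeta$.

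By Corollary~\ref{leucking}(ii) the Schatten hypothesis is the dyadic condition
$$S := \sum_{n\ge 0}2^{(2+\alpha)np/2}\sum_{j=0}^{2^n-1}\mu_{\varphi,\alpha}(R_{n,j})^{p/2} < \infty.$$
Writing $I_{n,j}\subset \TT$ for the dyadic arc of length $2\pi/2^n$ bounding the cell $R_{n,j}$, the core geometric step is a lower bound of the form
$$\mu_{\varphi,\alpha}(R_{n,j}) \gtrsim 2^{-n(2+\alpha)}\cdot \nu(I_{n,j})^{1-\alpha}$$
for those $(n,j)$ with $I_{n,j}\cap \supp(\nu)\ne \emptyset$. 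The exponent $1-\alpha$ is expected to emerge from a careful change-of-variables analysis of the generalized counting function $N_{\varphi,\alpha}$ near the clusters of contact preimages in $(\varphi^*)^{-1}(I_{n,j})$: the interplay between the weight $(1-|z|)^\alpha$, the squared derivative $|\varphi'(z)|^2$, and the infinite-angular-derivative regime produces precisely this capacity-sensitive scaling. Substituting this lower bound into $S$ reduces the problem to the scalar inequality
$$\sum_{n\ge 0}\sum_{j=0}^{2^n-1}\nu(I_{n,j})^{(1-\alpha)p/2} < \infty.$$

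When $p \le 2/(1-\alpha)$ the exponent satisfies $(1-\alpha)p/2 \le 1$, so for each arc $\nu(I_{n,j})^{(1-\alpha)p/2}\ge \nu(I_{n,j})$; consequently, since the $\{I_{n,j}\}_j$ partition $\TT$ at each scale $n$,
$$\sum_{j=0}^{2^n-1}\nu(I_{n,j})^{(1-\alpha)p/2} \ge \sum_{j=0}^{2^n-1}\nu(I_{n,j}) = \nu(\TT) = 1,$$
and summing over $n$ gives $S = \infty$. This contradicts the Schatten assumption and proves $\caap_\alpha(E_\varphi) = 0$.

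\textbf{Main obstacle.} The delicate point is producing the geometric lower bound $\mu_{\varphi,\alpha}(R_{n,j}) \gtrsim 2^{-n(2+\alpha)}\nu(I_{n,j})^{1-\alpha}$ with the sharp exponent $1-\alpha$. This requires quantifying the collective contribution of many contact preimages with infinite angular derivative and controlling how $N_{\varphi,\alpha}$ distributes its mass over $R_{n,j}$. The fact that exactly this exponent matches the critical threshold $p=2/(1-\alpha)$ in the statement is what makes the heuristic convincing, but the rigorous justification of the bound is the heart of the proof.
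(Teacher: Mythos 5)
There is a genuine gap, and it sits exactly where you place the ``heart of the proof'': the lower bound $\mu_{\varphi,\alpha}(R_{n,j})\gtrsim 2^{-n(2+\alpha)}\,\nu(I_{n,j})^{1-\alpha}$ is asserted, not proved, and there is no mechanism in sight that could produce it. The quantity $\mu_{\varphi,\alpha}(R_{n,j})=\int_{R_{n,j}}N_{\varphi,\alpha}\,dA$ is governed by how much \emph{Lebesgue} mass $\varphi^*$ pushes into Carleson boxes (this is the content of the comparison between $N_\varphi$ and the pull-back measure $m_\varphi$ recalled in Remark \ref{llqp}), whereas your $\nu$ is a finite-energy (Frostman) measure carried by $E_\varphi$, a set which typically has Lebesgue measure zero precisely in the interesting cases. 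Nothing in the change-of-variables formula couples $N_{\varphi,\alpha}$ from \emph{below} to such a capacitary measure; all the classical capacity statements for $\cD_\alpha$ (Beurling's weak-type inequality) run in the opposite direction, bounding capacities \emph{above} by norms. The exponent $1-\alpha$ is reverse-engineered so that the final arithmetic hits the threshold $p=2/(1-\alpha)$, and the concluding computation (using $(1-\alpha)p/2\le 1$ to get $\sum_j\nu(I_{n,j})^{(1-\alpha)p/2}\ge 1$) is correct but rests entirely on the unproved inequality. As written, the argument does not prove the theorem.

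For comparison, the paper's route avoids any such pointwise dyadic lower bound. The capacity enters only through Beurling's capacitary weak-type inequality applied to the powers $\varphi^n$: since $E_\varphi=E_{\varphi^n}$, one has $\caap_\alpha(E_\varphi)\le c_\alpha\|\varphi^n\|_\alpha^2$, so it suffices to show $\liminf_n\|\varphi^n\|_\alpha=0$. Lemma \ref{xlog} reduces this to the single integral condition $\int_\DD |\varphi'|^2\,(1-|\varphi|^2)^{-2}\bigl(\log 1/(1-|\varphi|^2)\bigr)^{-1}dA_\alpha<\infty$, because that integral is comparable to $\sum_n \cD_\alpha(\varphi^n)/\bigl((1+n)\log(1+n)\bigr)$ and the weights $1/\bigl((1+n)\log(1+n)\bigr)$ are not summable. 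The only place the Schatten hypothesis is used is in deducing this integral condition from $\mathcal{I}_{\alpha,p}(\varphi)<\infty$, via Jensen's inequality against the auxiliary finite measure $d\mu(w)=dA(w)/\bigl((1-|w|)|\log(1-|w|)|^{1+\beta}\bigr)$ with $\beta=2/(p-2)$ and the kernel estimate of Lemma \ref{Lem:intet}; the hypothesis $p\le 2/(1-\alpha)$ appears there as $1+\alpha+2/p\ge 2$, ensuring the resulting weight dominates $(1-|\varphi|^2)^{-2}$. If you want to repair your argument, the step to supply is not a geometric lower bound on $\mu_{\varphi,\alpha}(R_{n,j})$ but rather this averaging/Jensen reduction followed by the weak-type capacity inequality.
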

For the proof we need the following lemmas.
\begin{lem}\label{xlog}
If
\begin{equation}\label{inegalite1}
\int_\DD \frac{|\varphi'(z)|^2 dA_\alpha(z)}{(1-|\varphi(z)|^2)^2\log1/(1-|\varphi(z)|^2)}<\infty,
\end{equation}
then
$\caap_\alpha(E_\varphi)=0$
\end{lem}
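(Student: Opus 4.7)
My plan is to construct a real-valued (not holomorphic) auxiliary function on $\DD$ that blows up at every $\zeta\in E_\varphi$ and whose weighted Dirichlet energy is controlled by the hypothesis, and then to invoke a capacitary weak-type estimate. Concretely, I would set
$$F(z) := \bigl[\log\bigl(e/(1-|\varphi(z)|^2)\bigr)\bigr]^{1/2}, \qquad z\in \DD,$$
which is smooth and $\geq 1$ on $\DD$. A direct computation using $\partial|\varphi|^2/\partial z = \bar\varphi\,\varphi'$ and $|\nabla|\varphi|^2|^2 = 4|\varphi|^2|\varphi'|^2$ yields
$$|\nabla F(z)|^2 \;\leq\; \frac{|\varphi'(z)|^2}{(1-|\varphi(z)|^2)^2\log\bigl(1/(1-|\varphi(z)|^2)\bigr)},$$
so that hypothesis \eqref{inegalite1} gives $\int_\DD |\nabla F|^2\,dA_\alpha < \infty$.

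Next I would check that $F^*(\zeta) = +\infty$ for (a.e.) $\zeta \in E_\varphi$. Indeed, at such a $\zeta$ the non-tangential limit $\varphi^*(\zeta)$ exists with $|\varphi^*(\zeta)| = 1$, so $|\varphi(z)|\to 1$ and therefore $F(z)\to+\infty$ as $z\to\zeta$ non-tangentially. Consequently,
$$E_\varphi \;\subset\; \bigcap_{t>0}\{\zeta\in\TT : F^*(\zeta) \geq t\}$$
modulo a Lebesgue-null set.

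The final step is the capacitary weak-type inequality
$$\caap_\alpha\bigl(\{\zeta\in\TT : F^*(\zeta) \geq t\}\bigr) \;\leq\; \frac{C_\alpha}{t^2}\int_\DD |\nabla F|^2\,dA_\alpha, \qquad t>0,$$
from which the conclusion $\caap_\alpha(E_\varphi)=0$ follows upon letting $t\to\infty$. This is the main obstacle, because the Beurling--Salem--Zygmund bound recalled just before the lemma applies a priori only to $f\in\cD_\alpha$. I would reduce to the holomorphic situation by truncating: $F_t := \min(F,t)$ is bounded with finite weighted Dirichlet energy, and replacing $F_t$ inside $\DD$ by a suitable harmonic extension of its boundary trace does not increase the $\alpha$-Dirichlet energy; that harmonic function is $\re H_t$ for some $H_t\in\cD_\alpha$ with $\|H_t\|_{\cD_\alpha}^2\lesssim \int_\DD |\nabla F|^2\,dA_\alpha$, so Beurling's estimate applied to $H_t$ gives the displayed capacitary bound on $\{F^*\geq t\}$.
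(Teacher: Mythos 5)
Your strategy is workable in principle, but the final step --- the capacitary weak-type inequality for the non-holomorphic function $F$ --- is precisely where the difficulty of the lemma sits, and your reduction to Beurling's estimate has two genuine gaps. First, for $\alpha>0$ the harmonic extension of a boundary trace is \emph{not} the minimizer of the weighted energy $\int_\DD|\nabla u|^2\,dA_\alpha$ (Dirichlet's principle applies only to $\alpha=0$); to get $\int_\DD|\nabla u_t|^2\,dA_\alpha\lesssim\int_\DD|\nabla F_t|^2\,dA_\alpha$ you would need the trace and extension theorems for the weighted Sobolev space attached to $dA_\alpha$ (equivalently, the identification of the trace space with the fractional Sobolev space of order $(1-\alpha)/2$ on $\TT$), which is substantial machinery you neither state nor prove. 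Second, and more seriously, $E_\varphi$ is in general Lebesgue-null, while the harmonic extension $u_t=\re H_t$ only remembers the almost-everywhere boundary trace of $F_t$. Beurling's inequality applied to $H_t$ bounds $\caap_\alpha\{|H_t^*|\ge t\}$, but you never show that $E_\varphi\subset\{|H_t^*|\ge t\}$ up to a set of zero $\alpha$-capacity: knowing that $F_t\to t$ nontangentially at each point of $E_\varphi$ says nothing a priori about the nontangential limits of $u_t$ there. Closing this gap amounts to proving that $F_t$ and the harmonic extension of its trace have the same nontangential limits quasi-everywhere --- i.e., the quasi-everywhere boundary convergence theory for Sobolev/Besov traces --- which is essentially the whole content of the weak-type inequality you are trying to establish. (A smaller point: an inequality $\caap_\alpha\{F^*\ge t\}\le Ct^{-2}\int_\DD|\nabla F|^2\,dA_\alpha$ cannot hold literally, since $F\ge 1$ can have arbitrarily small gradient; this is harmless as $t\to\infty$ but signals that the correct statement must carry an extra term.)

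The paper sidesteps all of this by staying inside $\cD_\alpha$: expanding $\bigl[(1-x^2)^2\log(e/(1-x^2))\bigr]^{-1}$ as $\sum_{n\ge0}\frac{1+n}{\log e(1+n)}x^{2n}$ turns the hypothesis into $\sum_{n}\cD_\alpha(\varphi^{n})/\bigl((1+n)\log(1+n)\bigr)<\infty$, whence $\liminf_n\cD_\alpha(\varphi^{n})=0$ because $\sum 1/\bigl((1+n)\log(1+n)\bigr)$ diverges; since $E_\varphi=E_{\varphi^{n}}$, Beurling's weak-type inequality applied to the \emph{holomorphic} functions $\varphi^{n_k}$ gives $\caap_\alpha(E_\varphi)\le c_\alpha\|\varphi^{n_k}\|_\alpha^2\to0$. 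You should either adopt that argument or supply the missing potential-theoretic ingredients for yours.
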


\begin{proof}First, note that
\begin{equation}\label{xlog1}
\frac{1}{(1-x^2)^2\log e/(1-x^2)}\asymp \sum_{n\geq 0} \frac{1+n}{\log e(1+n) } x^{2n},\qquad x\in (0,1).
\end{equation}
Indeed,
$$
  \sum_{n\geq 0}\frac{1+n}{\log e(1+n) } x^{{2n}}\geq \sum_{\frac{1}{1-{x^2}}\leq n\leq \frac{2}{1-{x^2
  }}} \frac{1+n}{\log e(1+n) } x^{{2n}}
  \asymp\frac{1}{(1-x^2)^2\log e/(1-x^2)},
$$
  and
$$ \sum_{n\geq 0}\frac{1+n}{\log e(1+n) } x^{{2n}}\leq
 \sup_n  \frac{1+n}{\log e(1+n) }x^n\sum_m x^m
 \asymp\frac{1}{(1-x)\log e/(1-x)}\frac{1}{1-x}.
 $$
 By  \eqref{xlog1}, we have
\begin{eqnarray*}
\int_\DD \frac{|\varphi'(z)|^2 dA_\alpha(z)}{(1-|\varphi(z)|^2)^2\log1/(1-|\varphi(z)|^2)}
&\asymp&\sum_{n\geq 0} \frac{1+n}{\log (1+n)} \int_\DD  {|\varphi'(z)|^2}{|\varphi(z)|^{2n}}dA_\alpha(z)\\
&=&\sum_{n\geq1}
\frac{\cD_\alpha(\varphi^{n})}{(1+n)\log(1+n)}.
\end{eqnarray*}
So  \eqref{inegalite1} implies that  $\liminf_n\|\varphi^n\|_\alpha=0$.  On the other hand, the weak capacity inequality  gives
$$\caap_\alpha(E_\varphi)= \caap_\alpha(E_{\varphi^n}) \leq c_\alpha\| \varphi^n\|^2_\alpha.$$
Now let $n\to  \infty$,  we get our result.
\end{proof}
 \begin{lem}\label{Lem:intet}
  Let $d>0$, $c>-1$ and $\sigma\geq 0$,  then
\begin{equation}\label{integral}
 \int_\DD \frac{dA_c(w)}{|1-z\overline{w}|^{2+c+d} |\log(1-|w|^2)|^\sigma} \asymp\frac{1}{(1-|z|^2)^d|\log(1-|z|^2)|^\sigma}
 \end{equation}
\end{lem}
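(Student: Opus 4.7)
The plan is to reduce the two-dimensional integral to one dimension by rotational symmetry, then estimate the resulting one-dimensional integral by splitting its domain at the natural scale $1-|z|^2$.

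By rotation invariance I may assume $z=r\in[0,1)$ and write $w=se^{i\theta}$. Integrating out $\theta$ by the classical estimate
$$\int_0^{2\pi}\frac{d\theta}{|1-\rho e^{i\theta}|^{\beta}}\asymp \frac{1}{(1-\rho)^{\beta-1}},\qquad \beta>1,\ \rho\in[0,1),$$
applied with $\rho=rs$ and $\beta=2+c+d>1$, turns the left side of \eqref{integral} into
$$\int_0^1 \frac{s(1-s^2)^c\,ds}{(1-rs)^{1+c+d}\,|\log(1-s^2)|^{\sigma}}.$$
I read $|\log(1-|w|^2)|$ as $\log(e/(1-|w|^2))$, as in the proof of Lemma~\ref{xlog}, so that the weight is bounded below by $1$ on $\DD$.

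Setting $u=1-s^2$ and using the elementary fact $1-r\sqrt{1-u}\asymp(1-r^2)+u$ on $u\in[0,\tfrac12]$, I split the integral at $u=1-r^2$ and $u=\tfrac12$. On $(0,1-r^2]$ one has $(1-r\sqrt{1-u})^{1+c+d}\asymp(1-r^2)^{1+c+d}$; the upper bound then follows from $|\log u|\ge|\log(1-r^2)|$, while restricting to $(\tfrac12(1-r^2),1-r^2)$ gives the matching lower bound after integrating $u^c$. On $(1-r^2,\tfrac12]$ the denominator is $\asymp u^{1+c+d}$, so the integrand reduces to $u^{-1-d}|\log u|^{-\sigma}$; substituting $v=|\log u|$ converts this piece into $\int_{\log 2}^{|\log(1-r^2)|}e^{dv}v^{-\sigma}\,dv$, an integral dominated by its upper endpoint and again yielding $\asymp(1-r^2)^{-d}|\log(1-r^2)|^{-\sigma}$. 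On $(\tfrac12,1)$ the integrand is bounded, so this piece contributes $O(1)$, absorbed by the main term as $r\to 1$; for $r$ bounded away from $1$ both sides of \eqref{integral} are uniformly $\asymp 1$ and the statement is trivial.

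The main technical point is the slowly varying factor $|\log u|^{-\sigma}$: a crude bound replacing it by its extremum on the middle piece would be too lossy and give at best $(1-r^2)^{-d}$ without the expected logarithmic factor. The exponential change of variable $v=|\log u|$ is the key trick, converting the polynomial decay $u^{-1-d}$ into $e^{dv}$ and concentrating the integral near $v=|\log(1-r^2)|$, which is precisely what produces the correct logarithmic power on the right-hand side. An equivalent two-dimensional argument via the dyadic decomposition $\{w:2^k(1-|z|^2)\le|1-z\overline w|\le 2^{k+1}(1-|z|^2)\}$ would lead to the same bookkeeping: the geometric sum $\sum_{k}2^{-kd}$ absorbs the linear loss of $|\log|$ per scale.
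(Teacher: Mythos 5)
Your proof is correct and follows essentially the same route as the paper's: integrate out the angular variable via the standard kernel estimate $\int_0^{2\pi}|1-\rho e^{i\theta}|^{-\beta}\,d\theta\asymp(1-\rho)^{1-\beta}$ for $\beta>1$ (the paper cites Zhu's Lemma 3.2 for this), then estimate the resulting radial integral. The paper dismisses that radial integral as ``a direct computation''; your splitting at $u=1-r^2$ and $u=\tfrac12$ together with the substitution $v=|\log u|$ supply exactly the omitted details, and your observation that the logarithm must be read as $\log(e/(1-|w|^2))$ for the statement to make sense when $\sigma\ge 1$ is a legitimate point the paper glosses over.
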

\begin{proof} Let $w=|w|e^{it}$, by \cite[Lemma 3.2]{Z}, we have
$$ \int_{0}^{2\pi} \frac{dt}{|1-z|w|e^{-it}|^{2+c+d}}\asymp \frac{1}{(1-|zw|)^{1+c+d}}.$$
Using this result the lemma follows from a direct computation.
 \end{proof}
 \subsection{Proof of Theorem} Let $\beta=2/(p-2)$ and
$$d\mu(w)=\frac{dA(w)}{(1-|w|)|\log(1-|w|)|^{1+\beta}}.$$
We have $\mu(\DD)<\infty$, so for   $p\geq 2$, by Jensen inequality   and by Lemma \ref{Lem:intet},  we obtain
\begin{multline*}
\Big( {\int_\DDÊ\frac{|\varphi'(z)|^2dA_\alpha(z)}{(|1-|\varphi(z)|^2)^{1+\alpha +2/p
}|\log(1-|\varphi(z)|^2)|} \Big)^{\frac{p}{2}}}\\
\asymp\Big(\int_\DD\int_{\DD}Ê\frac{(1-|w|^2)^{2+\alpha-\frac2p  } |\log(1-|w|)|^{(1+\beta)\frac2p} |\varphi'(z)|^2  }{|1-\overline{w}\varphi(z)|^{4+2\alpha}}dA_\alpha(z)\frac{d\mu(w)}{\mu(\DD)}\Big)^{\frac{p}{2}} \\
\lesssim
 \int_{\DD}\Big(\int_\DDÊ\frac{(1-|w|^2)^{2+\alpha-\frac2p}|\log(1-|w|)|^{(1+\beta)\frac2p} |\varphi'(z)|^2 }{|1-\overline{w}\varphi(z)|^{4+2\alpha}}dA_\alpha(z)\Big)^{\frac{p}{2}}\frac{d\mu(w)}{\mu(\DD)}\asymp {\mathcal{I}_{\alpha,p}(\varphi)}.
 \end{multline*}
 Since $\alpha+2/p+1\geq 2$, by   Remarks \ref{llqp}.2 and  Lemma \ref{xlog} we get the result.
\hfill $\Box$

As a consequence, we obtain the following corollary.
\begin{cor} Let $0<\alpha\leq 1$ and $\varphi$ be a holomorphic self--map of $\DD$. Suppose that $\varphi$ is univalent. If $C_\varphi\in \cS_p(\cD_\alpha)$ then $\caap_{\frac{p\alpha}{2+p\alpha}}(E_\varphi)=0$.
\end{cor}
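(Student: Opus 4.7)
The plan is to use univalence to bootstrap the hypothesis: I would show that $C_\varphi\in\cS_p(\cD_\alpha)$ together with univalence forces $C_\varphi\in\cS_{p'}(\cD_{\alpha'})$ where $\alpha':=p\alpha/(2+p\alpha)$ and $p':=2+p\alpha$, and then invoke Theorem \ref{theorem1} with the parameters $(\alpha',p')$. The numerology is arranged so that $p'=2/(1-\alpha')$, which places $(\alpha',p')$ exactly on the boundary of the hypothesis of Theorem \ref{theorem1}, and also $\alpha'p'/\alpha=p$, which is what makes the exponents line up in the Luecking sum.

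Univalence enters at exactly one place. For $\varphi$ univalent with inverse $\psi:=\varphi^{-1}:\varphi(\DD)\to\DD$, every $z$ has at most one preimage under $\varphi$, so $N_{\varphi,\beta}(z)=(1-|\psi(z)|)^\beta$ for $z\in\varphi(\DD)$ and $0$ otherwise, for every $\beta>0$. In particular one has the pointwise identity $N_{\varphi,\alpha'}=N_{\varphi,\alpha}^{\alpha'/\alpha}$. Applying H\"older's inequality with exponents $\alpha/\alpha'$ and $\alpha/(\alpha-\alpha')$ on the dyadic rectangle $R_{n,j}$ (for which $|R_{n,j}|\asymp 2^{-2n}$) then yields
$$
\mu_{\varphi,\alpha'}(R_{n,j})=\int_{R_{n,j}}N_{\varphi,\alpha}(z)^{\alpha'/\alpha}\,dA(z)\;\leq\;\mu_{\varphi,\alpha}(R_{n,j})^{\alpha'/\alpha}|R_{n,j}|^{(\alpha-\alpha')/\alpha}\;\lesssim\;2^{-2n(\alpha-\alpha')/\alpha}\mu_{\varphi,\alpha}(R_{n,j})^{\alpha'/\alpha}.
$$

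Raising this to the $p'/2$\,-th power, multiplying by the Luecking weight $2^{(2+\alpha')np'/2}$, and using $\alpha'p'/\alpha=p$, a direct calculation collapses the $2^n$-exponent to $(2+\alpha)p/2$ and gives
$$
\sum_n 2^{(2+\alpha')np'/2}\sum_{j=0}^{2^n-1}\mu_{\varphi,\alpha'}(R_{n,j})^{p'/2}\;\lesssim\;\sum_n 2^{(2+\alpha)np/2}\sum_{j=0}^{2^n-1}\mu_{\varphi,\alpha}(R_{n,j})^{p/2}.
$$
The right-hand side is finite by Corollary \ref{leucking} applied to $C_\varphi\in\cS_p(\cD_\alpha)$, and running Corollary \ref{leucking} in the reverse direction gives $C_\varphi\in\cS_{p'}(\cD_{\alpha'})$. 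Since $p'=2/(1-\alpha')$, Theorem \ref{theorem1} applied with $(\alpha',p')$ then concludes $\caap_{\alpha'}(E_\varphi)=0$, as required.

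The only real obstacle is the exponent bookkeeping: one must choose $\alpha'$ and $p'$ so that both $\alpha'p'/\alpha=p$ and $p'=2/(1-\alpha')$ hold simultaneously, which forces the values $\alpha'=p\alpha/(2+p\alpha)$ and $p'=2+p\alpha$. Once this is fixed, univalence is used only through the identity $N_{\varphi,\alpha'}=N_{\varphi,\alpha}^{\alpha'/\alpha}$, and a single H\"older inequality does the rest.
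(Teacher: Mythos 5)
Your proof is correct on the range where the H\"older step is legitimate, and it reaches the same intermediate statement as the paper --- namely $C_\varphi\in\cS_{p'}(\cD_{\alpha'})$ with $\alpha'=p\alpha/(2+p\alpha)$, $p'=2+p\alpha$, sitting exactly on the boundary $p'=2/(1-\alpha')$ of Theorem \ref{theorem1} --- but by a genuinely different route. The paper goes through the Pau--P\'erez characterization $C_\varphi\in\cS_q(\cD_\beta)\iff N_{\varphi,\beta}\in L^{q/2}(\DD,d\lambda)$ from \cite{PP}: since univalence gives $N_{\varphi,\alpha\gamma}=N_{\varphi,\alpha}^{\gamma}$, the $L^{q/2}$ norms transform exactly and one gets the clean \emph{equivalence} $C_\varphi\in\cS_p(\cD_\alpha)\iff C_\varphi\in\cS_{p/\gamma}(\cD_{\alpha\gamma})$, from which the corollary is immediate. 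You instead stay inside the paper's own toolkit (the dyadic Luecking criterion of Corollary \ref{leucking}) and replace the exact identity of norms by a one-sided H\"older estimate on each $R_{n,j}$; your exponent bookkeeping ($\alpha'p'/\alpha=p$ collapsing the weight to $2^{(2+\alpha)np/2}$) is verified and correct. What your version buys is self-containedness --- no appeal to \cite{PP} --- at the cost of being a one-directional implication.

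The one real caveat: your H\"older inequality with exponents $\alpha/\alpha'$ and $\alpha/(\alpha-\alpha')$ requires $\alpha'\le\alpha$, which is equivalent to $p\le 2/(1-\alpha)$. For $\alpha=1$ (the Hardy-space case, which is what Remark \ref{exempleunivalent} and the proposition on univalent symbols actually use) this is no restriction, but for $\alpha<1$ and $p>2/(1-\alpha)$ one has $\alpha'>\alpha$ and your inequality points the wrong way; since $\cS_p\not\subset\cS_q$ for $q<p$ you cannot simply shrink $p$ to re-enter the good range. To cover that regime along your lines you would need a reverse H\"older estimate on the boxes, i.e.\ the sub-mean-value property of the Nevanlinna counting function (which is precisely what makes the exact equivalence in \cite{PP} work). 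As stated, then, your argument proves the corollary for $p\le 2/(1-\alpha)$ --- in particular for every case the paper actually uses --- while the paper's argument has no such restriction.
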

\begin{proof} Since $\varphi$ is univalent, $N_{\varphi,\beta}=(N_{\varphi})^{\beta}$. By \cite{PP},
$C_\varphi\in S_{p}(\cD_{\beta})$ if and only if $N_{\varphi,\beta}\in L^{p/2}(\DD,d\lambda)$. Using these observations, it is clear that $C_\varphi\in \cS_p(\cD_\alpha)$ if and only if $C_\varphi\in \cS_{p/\gamma}(\cD_{\alpha\gamma})$. Let $\gamma = {2+p\alpha}/{ p}$, since $C_\varphi\in \cS_p(\cD_\alpha)$,  $C_\varphi\in \cS_{2+p\alpha}(\cD_{\frac{p\alpha}{2+p\alpha}})$. The result follows from Theorem \ref{theorem1}.
\end{proof}
\begin{rem}\label{exempleunivalent}If $\varphi$ is univalent function and  $C_\varphi\in \cS_p(\hH )$ (here $\alpha=1$) then $\caap_{\frac{p}{2+p}}(E_\varphi)=0$.
\end{rem}
\begin{prop} If $C_\varphi$ is bounded on $\cD_\alpha$ and  $\varphi(\DD)$ is contained in a polygon of the unit  disc,  then  $\caap_\alpha(E_\varphi)=0$.
\end{prop}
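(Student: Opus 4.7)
The plan is to decompose $E_\varphi$ via the finitely many vertices of the polygon that lie on $\TT$, and for each such vertex to exhibit a function in $\cD_\alpha$ whose modulus blows up there; the Beurling weak capacitary inequality recalled in this section will then force the corresponding piece of $E_\varphi$ to have $\caap_\alpha=0$.

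First I would observe that if $P$ denotes the polygon containing $\varphi(\DD)$, then $\overline{P}\cap\TT=\{w_1,\dots,w_m\}$ is the finite set of vertices of $P$ lying on $\TT$. For every $\zeta\in E_\varphi$ the radial limit $\varphi^*(\zeta)$ belongs to $\overline{\varphi(\DD)}\cap \TT\subset\overline P\cap\TT$ and hence equals some $w_j$; thus $E_\varphi=\bigcup_{j=1}^{m}E_j$ with $E_j:=\{\zeta\in\TT:\varphi^*(\zeta)=w_j\}$, and by subadditivity of $\caap_\alpha$ it suffices to prove $\caap_\alpha(E_j)=0$ for each $j$.

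Next, fix $j$, write $w=w_j$ and, assuming $\alpha\in(0,1]$, pick $\gamma\in(0,\alpha/2)$. I would take as test function
$$f(z)=(1-\overline{w}z)^{-\gamma},\qquad z\in\DD,$$
in the principal branch. A short polar-coordinate computation centered at $w$ (alternatively one invokes Lemma~\ref{Lem:intet}) gives
$$\cD_\alpha(f)\asymp\int_{\DD}\frac{(1-|z|^2)^\alpha}{|1-\overline{w}z|^{2\gamma+2}}\,dA(z)<\infty,$$
so $f\in\cD_\alpha$, while $|f(z)|=|1-\overline wz|^{-\gamma}\to\infty$ as $z\to w$ from within $\DD$. (For $\alpha=0$ the log-power $(\log(1/(1-\overline wz)))^s$ with $0<s<1/2$ plays the same role.) Since $C_\varphi$ is bounded on $\cD_\alpha$, one has $f\circ\varphi\in\cD_\alpha$, and the Beurling weak capacitary inequality ensures that $(f\circ\varphi)^*$ exists finitely off a set of zero $\alpha$-capacity. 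But for every $\zeta\in E_j$, $\varphi(r\zeta)\to w$ as $r\to 1^-$, hence $|f(\varphi(r\zeta))|\to\infty$; so $f\circ\varphi$ admits no finite radial limit at any point of $E_j$, which forces $\caap_\alpha(E_j)=0$ and, after summing over $j$, gives $\caap_\alpha(E_\varphi)=0$.

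The only subtle ingredient is the construction of $f$: we need an element of $\cD_\alpha$ whose modulus blows up at the prescribed boundary point, and the admissible exponent window $\gamma\in(0,\alpha/2)$ is exactly what the defining integral of $\cD_\alpha$ permits. Its non-emptiness for $\alpha>0$ makes the argument uniform, while the borderline case $\alpha=0$ requires the logarithmic substitute. Everything else is a routine application of the weak capacitary inequality already recalled in this section.
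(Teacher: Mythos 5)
Your proof is correct, but it takes a genuinely different route from the paper's. The paper argues globally: boundedness of $C_\varphi$ on $\cD_\alpha$ gives the Carleson-type bound $\mu_{\varphi,\alpha}(W(\xi,h))=O(h^{2+\alpha})$, the polygon hypothesis forces all but $O(1)$ dyadic boxes per generation to carry no mass, and summing yields $\int_\DD |\varphi'|^2(1-|\varphi|^2)^{-2}\,dA_\alpha<\infty$, after which Lemma~\ref{xlog} delivers $\caap_\alpha(E_\varphi)=0$. You instead localize: since a segment with endpoints in $\overline{\DD}$ has its interior in $\DD$, the polygon meets $\TT$ only in finitely many vertices, so $E_\varphi$ splits into finitely many fibres $E_j=\{\varphi^*=w_j\}$, and for each you transport an explicit unbounded test function $(1-\overline{w_j}z)^{-\gamma}\in\cD_\alpha$ (with the logarithmic substitute when $\alpha=0$) through $C_\varphi$ and invoke Beurling's quasi-everywhere existence of radial limits. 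Your argument is more elementary -- it bypasses the Carleson-measure machinery and Lemma~\ref{xlog} entirely, uses only the boundedness of $C_\varphi$ on finitely many specific functions, and in fact only needs that $\overline{\varphi(\DD)}\cap\TT$ is finite rather than the full polygon structure; it also handles the borderline case $\alpha=0$ cleanly, where the paper's summation $\sum_n 2^{-\alpha n}$ degenerates. The paper's approach, on the other hand, reuses the quantitative framework of the section and yields the stronger integral condition \eqref{inegalite1} as a by-product. One small quibble: your appeal to Lemma~\ref{Lem:intet} is not quite apt, since that lemma is stated for $d>0$ while you need the convergent regime $2\gamma+2<2+2\alpha+\mathrm{something}$ of the Forelli--Rudin estimates; but the direct polar-coordinate computation you also mention settles $\cD_\alpha(f)<\infty$ for $\gamma<\alpha/2$, so this is only a matter of citation, not of substance.
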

\begin{proof}
Since  $C_\varphi$ is bounded on $\cD_\alpha$,
$$\sup_{\xi\in \TT}\mu_{\varphi,\alpha}(W(\xi,h))=O(h^{2+\alpha})(h\to 0).$$
see \cite{EKSY, KL}. Hence $\mu_{\varphi,\alpha}(R_{n,j})=O(1/2^{(2+\alpha)n})$. Suppose that   $\varphi(\DD)$  is contained in a polygon of the unit  disc,
then for all $n$ we have  $\mu_{\varphi,\alpha}(R_{n,j})=0$ uniformly on $n$ except for a finite number of $j$. Then there exists $J$ such that for all $n$ and all $j\geq J$,  $\mu_{\varphi,\alpha}(R_{n,j})=0$ and so
$$\sum_{n\geq 1} 2^{2n}\sum_{j}\mu_{\varphi,\alpha}(R_{n,j})<\infty.$$
Now, we get
$$\int_\DD\frac{|\varphi'(z)|}{(1-|\varphi(z)|^2)^2}dA_\alpha(z)=\sum_n 2^{2n}\sum_j\Big(\int_{R_{n,j}}N_{\varphi,\alpha}(z)dA(z)\Big)<\infty,$$
and by Lemma \ref{xlog} we obtain
$\caap_\alpha(E_\varphi)=0$.
\end{proof}
Gallardo-Gonz\'alez \cite[Corollary 3.1]{GG1} showed that for all $\alpha \in (0,1]$ there exists a compact composition operator, $C_\varphi$, on $\cD_\alpha$
such that the Hausdorff dimension of $E_\varphi$ is one. In the case of the classical Dirichlet space ($\alpha =  0$) the situation is different as showed in the following proposition.
\begin{prop} If  $C_\varphi$ is compact on the Dirichlet space $\cD$, then $E_\varphi$ as vanishing Hausdorff dimension.
\end{prop}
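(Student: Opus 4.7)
The target is to show $\dim_H(E_\varphi)=0$ under the assumption that $C_\varphi$ is compact on $\cD=\cD_0$. By the Frostman--Riesz correspondence recalled at the beginning of this section, $\dim_H(E)=\sup\{\alpha\in(0,1):\caap_\alpha(E)>0\}$, so it suffices to prove $\caap_\alpha(E_\varphi)=0$ for every $\alpha\in(0,1)$. I fix such an $\alpha$.

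My plan is to verify the hypothesis of Lemma~\ref{xlog} for $\cD_\alpha$; by the series identity established in its proof, this reduces to
\begin{equation*}
\sum_{n\ge 1}\frac{\cD_\alpha(\varphi^n)}{(n+1)\log(n+1)}<\infty.
\end{equation*}
Compactness of $C_\varphi$ on $\cD$, tested on the unit-norm weakly null sequence $(z^n/\sqrt{n+1})\subset\cD$, gives $\cD(\varphi^n)=o(n)$. To pass from $\cD$ to $\cD_\alpha$ I use Hölder's inequality on the Fourier coefficients of $\varphi^n$: splitting
\begin{equation*}
(k+1)^{1-\alpha}|\widehat{\varphi^n}(k)|^2=\bigl[(k+1)|\widehat{\varphi^n}(k)|^2\bigr]^{1-\alpha}\bigl[|\widehat{\varphi^n}(k)|^2\bigr]^{\alpha}
\end{equation*}
and applying Hölder with exponents $1/(1-\alpha)$ and $1/\alpha$ yields
\begin{equation*}
\cD_\alpha(\varphi^n)\lesssim\|\varphi^n\|_{H^2}^{2\alpha}\,\cD(\varphi^n)^{1-\alpha}.
\end{equation*}

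The main obstacle is extracting a quantitative decay of $\|\varphi^n\|_{H^2}$: the trivial bound $\|\varphi^n\|_{H^2}\le 1$ together with $\cD(\varphi^n)=o(n)$ only gives $\cD_\alpha(\varphi^n)=o(n^{1-\alpha})$, which is not summable against $1/((n+1)\log(n+1))$. I overcome this by observing that the reproducing-kernel characterization of compactness transfers from $\cD$ to $H^2$: compactness on $\cD$ forces $\|k^{\cD}_{\varphi(w)}\|_{\cD}/\|k^{\cD}_{w}\|_{\cD}\to 0$ as $|w|\to 1$, i.e. $\log(1/(1-|\varphi(w)|^2))/\log(1/(1-|w|^2))\to 0$, which trivially implies $(1-|w|^2)/(1-|\varphi(w)|^2)\to 0$; since the latter is the compactness criterion on $H^2$, one gets compactness of $C_\varphi$ on $H^2$ and therefore, by MacCluer--Shapiro (Remark~\ref{llqp}.2), the vanishing Carleson estimate $\sup_{\zeta\in\TT}m_\varphi(W(\zeta,h))=o(h)$. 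Feeding this uniform decay into the dyadic-box decomposition of Corollary~\ref{leucking} provides the needed summability, giving $\caap_\alpha(E_\varphi)=0$ for the fixed $\alpha$; ranging over $\alpha\in(0,1)$ yields $\dim_H(E_\varphi)=0$.
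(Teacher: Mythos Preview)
Your reproducing-kernel step is correct and is in fact the heart of the matter: from compactness of $C_\varphi^*$ on $\cD$ applied to $k_\lambda^{\cD}$ you get
\[
\frac{\log\dfrac{1}{1-|\varphi(\lambda)|^2}}{\log\dfrac{1}{1-|\lambda|^2}}\longrightarrow 0,\qquad |\lambda|\to 1.
\]
But the way you exploit this is where the argument breaks down. You pass only to the \emph{qualitative} consequence $(1-|\lambda|)/(1-|\varphi(\lambda)|)\to 0$, i.e.\ compactness on $H^2$, and then invoke MacCluer--Shapiro and Corollary~\ref{leucking} to obtain ``the needed summability''. This last sentence is not a proof: compactness on $H^2$ gives $\sup_\zeta m_\varphi(W(\zeta,h))=o(h)$ with no rate whatsoever, hence only $\|\varphi^n\|_{H^2}\to 0$ with no rate; and Corollary~\ref{leucking} characterizes $\cS_p(H^2)$, a hypothesis you have not established. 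With only $\cD(\varphi^n)=o(n)$ and $\|\varphi^n\|_{H^2}=o(1)$, your H\"older bound yields $\cD_\alpha(\varphi^n)=o(n^{1-\alpha})$, which is \emph{not} summable against $1/((n+1)\log(n+1))$. So the series criterion of Lemma~\ref{xlog} is never verified.

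The paper uses the same kernel estimate but keeps its full quantitative strength: the ratio of logs tending to $0$ means that for every $\beta\in(0,1]$ there is $C_\beta$ with $(1-|\lambda|^2)^\beta\le C_\beta(1-|\varphi(\lambda)|^2)^2$ on $\DD$. Taking $\beta=\alpha$, one bounds the integral in Lemma~\ref{xlog} directly:
\[
\int_\DD \frac{|\varphi'(z)|^2}{(1-|\varphi(z)|^2)^2}\,dA_\alpha(z)
\;\lesssim\;\int_\DD |\varphi'(z)|^2\,dA(z)=\cD(\varphi)<\infty,
\]
which is even stronger than the hypothesis of Lemma~\ref{xlog} (no logarithm needed). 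This gives $\caap_\alpha(E_\varphi)=0$ for every $\alpha\in(0,1)$ without any detour through $H^2$-norms, H\"older interpolation, or Corollary~\ref{leucking}. In short: you had the right estimate in hand but discarded its quantitative content; use it pointwise inside the integral of Lemma~\ref{xlog} rather than trying to extract decay of $\|\varphi^n\|_{H^2}$.
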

\begin{proof}
Let  $K_\lambda(z)=\log 1/(1-z\overline{\lambda})$ be the reproducing kernel  of  $\cD$ and let $k_\lambda(z)=K_\lambda(z)/(\log 1/1-|\lambda|^2)^{1/2}$ the normalized reproducing kernel. If $C_\varphi$ is compact then $C^*_\varphi$ is as well  and  $C^{*}_{\varphi}(k_\lambda)\to 0,\quad |\lambda| \to 1$. Hence
$$\frac{k_{\varphi(\lambda)}(\varphi(\lambda))}{k_{\lambda}(\lambda)}=\frac{|\log (1-|\varphi(\lambda)|^2)|}{|\log (1-|\lambda|^2|)}\to 0,\qquad |\lambda|\to 1.$$
Then $(1-|\lambda|^2)^\beta/(1-|\varphi(\lambda)|^2)^2$ is bounded for all  ${\beta}\in(0,1]$. So
$$\int_\DD \frac{|\varphi'(z)|^2}{(1-|\varphi(z)|^2)^2}dA_\alpha(z)\leq C\int_\DD|\varphi'(z)|^2dA(z)=\cD(\varphi)<\infty.$$
By Lemma \ref{inegalite1}, $\caap_\alpha(E_\varphi)=0$ for all $\alpha$ and hence $d(E)=0$ (see \cite{KS}).
\end{proof}

\small

\end{document}